\documentclass[11pt,a4paper]{article}

\title{On perfect flag-rank metric codes}

\usepackage{authblk}
\usepackage{relsize}
\usepackage{cite}
\usepackage{color}
\usepackage{amsmath,amsthm,amssymb}
\usepackage[margin=2.5cm]{geometry}
\usepackage{colortbl}
\usepackage[dvipsnames]{xcolor}
\usepackage{hyperref, enumitem}
\hypersetup{
  colorlinks   = true, %Colours links instead of ugly boxes
  urlcolor     = blue, %Colour for external hyperlinks
  linkcolor    = Purple, %Colour of internal links%
  citecolor   = red %Colour of citations
}

\usepackage{float}

\theoremstyle{definition}
\newtheorem{theorem}{Theorem}[section]
\newtheorem{definition}[theorem]{{{Definition}}}
\newtheorem{example}[theorem]{{{Example}}}

\newtheorem{remark}[theorem]{{{Remark}}}

\newtheorem{corollary}[theorem]{{{Corollary}}}%[theorem]
\newtheorem{proposition}[theorem]{{{Proposition}}}
\newtheorem{lemma}[theorem]{{{Lemma}}}

\newcommand{\numberset}{\mathbb}

\newcommand{\F}{\numberset{F}}
\newcommand{\Mat}{\mbox{Mat}}

\newcommand{\mC}{\mathcal{C}}

\newcommand{\Fq}{\F_q}

\newcommand{\rk}{\textnormal{rk}}

\DeclareMathOperator{\PG}{PG}

\author[1]{Gianira N. Alfarano}
\author[2]{Usman Mushrraf}
\author[2]{Ferdinando Zullo}
\affil[1]{Universit\'e de Rennes,  IRMAR, Rennes, France.\thanks{gianira-nicoletta.alfarano@univ-rennes.fr}}
\affil[2]{Dipartimento di Matematica e Fisica, Universit\`a degli Studi della Campania ``Luigi Vanvitelli'', I--\,81100 Caserta, Italy\thanks{\{usman.mushrraf,ferdinando.zullo\}@unicampania.it}}

\date{}

\begin{document}

\maketitle

\begin{abstract}
Flag-rank-metric codes arise as a natural generalization of rank-metric codes in the context of network communication. While recent research has mainly focused on algebraic and structural properties of these codes, the combinatorial geometry underlying the flag-rank metric remains largely unexplored. In this paper, we initiate a detailed investigation of this geometry. We explicitly determine the size of spheres of small flag-rank radius in the space $\mathrm{U}(n,\F_q)$ of upper triangular matrices over the finite field $\F_q$, and consequently obtain formulas for the size of balls of radius at most $3$. Using these enumerative results, we derive a sphere-packing bound for flag-rank-metric codes and introduce the notion of perfect codes with respect to the flag-rank metric. We observe that no non-trivial perfect flag-rank-metric codes exist in $\mathrm{U}(n,\F_q)$ for $n\in\{2,3\}$. We then investigate the possible parameters of perfect codes in higher dimensions. For minimum distance $3$, we obtain a characterization in terms of the codimension of the code, and show that suitable maximum flag-rank distance codes with minimum distance $3$ yield non-trivial perfect codes. For minimum distances $5$ and $7$, we derive explicit quadratic and cubic conditions, respectively, that any perfect code must satisfy. Finally, using asymptotic estimates for balls of fixed radius, we prove that for fixed length $n$ and $\delta\in\{3,5,7,9,11\}$, perfect linear flag-rank-metric codes with minimum distance $\delta$ do not exist over $\Fq$ for all sufficiently large $q$.
\end{abstract}

\medskip

\noindent \textbf{Keywords: } Flag-rank metric; perfect code; sphere-packing bound.

\noindent\textbf{MSC2020:} Primary 94B65; Secondary 94B05, 94B25, 94B60, 05B25.

\medskip

\section{Introduction}\label{sec:intro}

Network coding as a communication technique was introduced in the seminal work of Ahlswede, Cai, Li and Yeung~\cite{ahlswede2000network}, where intermediate nodes are allowed to combine incoming messages before forwarding them. This paradigm can substantially improve throughput, robustness, and latency in multicast networks. A major algebraic model for this framework was developed by K\"otter and Kschischang~\cite{koetter2008coding}, who proposed to encode information as vector subspaces of a finite-dimensional vector space. In this setting, a \emph{subspace code} is a set of subspaces of $\F^n$ equipped with the subspace distance, defined for every pair of subspaces $U,V\subseteq \F^n$ as
\[
\mathrm{d}_{\mathrm S}(U,V)=\dim(U+V)-\dim(U\cap V).
\]
This metric models errors and erasures in random network coding. Subspace codes, and in particular constant-dimension codes, have been extensively studied; see, for instance,~\cite{horlemann2018constructions, kurz2021constructions} and the references therein.

A fruitful approach to the study of constant-dimension codes arises from the representation of the Grassmannian $\mathrm{Gr}_k(\F^n)$ by reduced row echelon $k\times n$ matrices of rank $k$, modulo the natural action of $\mathrm{GL}(k,\F)$. This gives rise to a decomposition of the Grassmannian into \emph{Schubert cells}. The largest cell consists of subspaces $U_A\subseteq \F^n$ that admit a generator matrix of the form $(I_k\mid A)$, where $A\in \mathrm{Mat}(k,n-k,\F)$.
On this cell, the subspace distance is twice the rank distance, namely
\[
\mathrm{d}_{\mathrm S}(U_A,U_B)=2\rk(A-B).
\]
This observation creates a direct link between subspace coding and the theory of rank-metric codes, which has become a central topic in coding theory.

To overcome the restrictions imposed by constant-dimension codes, Liebhold, Nebe and V\'azquez-Castro proposed the use of \emph{flag codes}, whose codewords are chains of nested subspaces; see~\cite{liebhold2018network}. This idea has generated an active research area; see, for instance,~\cite{alonso2020flag,alonso2021optimum,alonso2021orbital,alonso2021cyclic,navarro2022flag,alonso2023flag,kurz2021bounds,stanojkovski2022submodule}. Algebraically, full flags live in the flag variety
\[
\mathrm{Fl}(\F^{n+1})
=
\{(U_1,\ldots,U_n)\mid U_1\subseteq\cdots\subseteq U_n,\ \dim(U_i)=i\},
\]
which admits a cell decomposition analogous to that of the Grassmannian.
A \emph{degenerate} version of the flag variety, denoted by $\mathrm{Fl}^{(a)}(\F^{n+1})$, was introduced from a geometric perspective in~\cite{feigin2012degeneration} and studied from a linear algebraic viewpoint in~\cite{irelli2015degenerate}. Its relevance to coding theory was recognized in~\cite{fourier2021degenerate}, where it was shown that the largest cell of $\mathrm{Fl}^{(a)}(\F^{n+1})$ is isometric to the space $\mathrm{U}(n,\F)$ of $n\times n$ upper triangular matrices over $\F$, endowed with the \emph{flag-rank metric}. This observation naturally leads to the notion of \emph{flag-rank-metric codes}, which may be regarded as the flag analogue of rank-metric codes.

The study of flag-rank-metric codes was initiated by Fourier and Nebe in~\cite{fourier2021degenerate}, where the first structural results and optimal constructions were obtained for specific parameter ranges. In~\cite{alfarano2024maximum}, a general Singleton-like bound relating the codimension and the minimum flag-rank distance of codes was established, extending the result of Fourier and Nebe. Motivated by this bound, the authors introduced and studied \emph{maximum flag-rank-distance codes}, or \emph{MFRD codes}, that is, codes whose parameters attain equality in the Singleton-like bound. They also provided new constructions and classifications for small distance values, as well as general constructions based on MRD codes, MDS codes, and finite-geometric techniques.

In this paper, we focus on the combinatorial aspects of the flag-rank metric over the finite field $\F_q$. More precisely, we study the sizes of spheres and balls in the metric space $\mathrm{U}(n,\F_q)$. We denote by $s_q(n,t)$ the size of the sphere of radius $t$ centered at the zero matrix $\boldsymbol{0}$ with respect to the flag-rank metric and by $b_q(n,t)$ the size of the corresponding ball. We explicitly determine $s_q(n,t)$ for $t=0,1,2,3$, and consequently obtain closed formulas for $b_q(n,t)$ for $t\le3$. 

Using these results, we derive a sphere-packing bound for flag-rank-metric codes. This allows us to introduce the notion of \emph{perfect codes} in the flag-rank metric. We first prove that no non-trivial perfect flag-rank-metric codes exist in $\mathrm{U}(n,\F_q)$ for $n\in\{2,3\}$. We then investigate the numerical conditions imposed by the existence of perfect codes for larger parameters, and show, in particular, that no perfect codes exist for even minimum distance $\delta$. For minimum distance $\delta=3$, we obtain an exact characterization in terms of the codimension of the code. This condition also shows that MFRD codes of minimum distance $3$ and length $n=q^2+q+1$, when they exist, are perfect.

We further analyze perfect codes with larger minimum distance. For $\delta=5$, we derive an explicit quadratic condition on the parameters, while for $\delta=7$ we obtain an explicit cubic condition. These equations provide strong arithmetic restrictions on the possible existence of perfect flag-rank-metric codes. In particular, they yield  obstructions and computational evidence excluding many possible parameter sets. We also obtain an asymptotic obstruction for perfect codes of fixed length. More precisely, for fixed $n$ and fixed odd minimum distance $\delta=2t+1$, we compare the asymptotic growth of the ball size with the codimension forced by the Singleton-like bound. This excludes perfect codes of fixed length for $\delta\in\{3,5,7,9,11\}$ and all sufficiently large~$q$.

The paper is organized as follows. In Section~\ref{sec:prel}, we recall the basic definitions and notation concerning flag-rank-metric codes. In Section~\ref{sec:balls}, we compute the sizes of spheres and balls of small radius in $\mathrm{U}(n,\F_q)$ and derive some general upper bounds. In Section~\ref{sec:sphere_packing}, we prove the sphere-packing bound and introduce perfect flag-rank-metric codes. Finally, in Section~\ref{sec:perfect_fr_codes}, we study perfect codes, proving non-existence results in small dimensions, deriving necessary numerical conditions for larger minimum distances, and establishing an asymptotic nonexistence result for fixed length and growing field size. We draw some conclusions in Section~\ref{sec:conclusions}.

\medskip
\section*{Acknowledgments}
This research was partially supported by Università Italo Francese (UIF/UFI) via PHC Galileo 2024 – G24-216. G. N. Alfarano is supported by the Agence Nationale de la Recherche through grant number ANR-24-CPJ1-0075-01. F. Zullo was partially supported by the Italian National Group for Algebraic and Geometric Structures and their Applications (GNSAGA - INdAM).
\medskip

\section{Preliminaries}\label{sec:prel}

In this section, we recall the basic definitions and notation concerning flag-rank-metric codes. For a detailed treatment, we refer the interested reader to \cite{liebhold2018network,alfarano2024maximum}.

Throughout the paper, we will use the following notation.

\medskip

\noindent{\textbf{Notation.}} 
Let $\F_q$ be the finite field with $q$ elements. We denote by
$\Mat(l,m,\F_q)$ the space of $l\times m$ matrices with entries in $\F_q$. Moreover, we denote by $\mathrm{U}(n,\F_q)$ the space of upper triangular matrices in $\Mat(n,n,\F_q)$, that is,
\[
\mathrm{U}(n,\F_q)=
\left\{
M=(m_{i,j})\in \Mat(n,n,\F_q) \;:\; m_{i,j}=0 \text{ whenever } i>j
\right\}.
\]
Let $M\in \mathrm{U}(n,\F_q)$ and let $1\le i\le n$. We denote by $M_{[i]}$ the $i\times(n-i+1)$ submatrix of $M$ obtained by deleting the first $i-1$ columns and the last $n-i$ rows. Finally, we denote by $\boldsymbol{0}$ the zero matrix.

\medskip 

We recall that $\mathrm{U}(n,\F_q)$ is an $\F_q$-vector space of dimension $\frac{n(n+1)}{2}$. We endow $\mathrm{U}(n,\F_q)$ with the \emph{flag-rank distance}, which is the metric induced by the \emph{flag-rank weight}.

\begin{definition}\label{def:frd}
The \textbf{flag-rank weight} of a matrix $M\in \mathrm{U}(n,\F_q)$ is defined as
\[
\mathrm{fr}(M):=\sum_{i=1}^{n}\rk(M_{[i]}).
\]
The \textbf{flag-rank distance} on $\mathrm{U}(n,\F_q)$ is the map
\[
\begin{array}{rccc}
\mathrm{d}_{\mathrm{fr}}:& \mathrm{U}(n,\F_q)\times \mathrm{U}(n,\F_q) & \longrightarrow & \mathbb N_0 \\
& (A,B) & \longmapsto & \mathrm{fr}(A-B).
\end{array}
\]
\end{definition}

\begin{example}
Let
\[
A=
\begin{pmatrix}
a & b\\
0 & c
\end{pmatrix}
\in \mathrm{U}(2,\F_q).
\]
Then
\[
\mathrm{fr}(A)=
\rk
\begin{pmatrix}
a & b
\end{pmatrix}
+
\rk
\begin{pmatrix}
b\\
c
\end{pmatrix}.
\]
Similarly, if
\[
B=
\begin{pmatrix}
a & b & c\\
0 & d & e\\
0 & 0 & f
\end{pmatrix}
\in \mathrm{U}(3,\F_q),
\]
then
\[
\mathrm{fr}(B)
=
\rk
\begin{pmatrix}
a & b & c
\end{pmatrix}
+
\rk
\begin{pmatrix}
b & c\\
d & e
\end{pmatrix}
+
\rk
\begin{pmatrix}
c\\
e\\
f
\end{pmatrix}.
\]
\end{example}

\begin{definition}
A \textbf{linear flag-rank-metric code} $\mC$ is an $\F_q$-subspace of $\mathrm{U}(n,\F_q)$ endowed with the flag-rank distance. The \textbf{minimum flag-rank distance} of $\mC$ is defined~as
\[
\mathrm{d}_{\mathrm{fr}}(\mC)
:=
\min\{\mathrm{fr}(M) : M\in \mC\setminus\{\boldsymbol{0}\}\}.
\]
If $\dim_{\F_q}(\mC)=k$ and $\mathrm{d}_{\mathrm{fr}}(\mC)=\delta$, then we say that $\mC$ is an $\{n,k,\delta\}_{\F_q}$ flag-rank-metric code.
\end{definition}

In \cite[Theorem 3.3]{alfarano2024maximum} a Singleton-like bound for flag-rank-metric codes was proved. We recall it in terms of the codimension of the code.

\begin{theorem}[Singleton-like bound]\label{th:Singleton_codimension}
Let $\mC\subseteq \mathrm{U}(n,\F_q)$ be a linear flag-rank-metric code with minimum distance $\delta$. Then
\begin{equation}\label{eq:sing-bound}
\mathrm{codim}_{\F_q}(\mC)
\ge
\delta-1
+
\left\lfloor\sqrt{\delta-1}\right\rfloor
\left\lfloor
\frac{\sqrt{4(\delta-1)+1}-1}{2}
\right\rfloor.
\end{equation}
\end{theorem}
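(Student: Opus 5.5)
The plan is a Singleton-type puncturing argument. I will exhibit a subspace $W\subseteq \mathrm{U}(n,\F_q)$ in which every nonzero element has flag-rank weight at most $\delta-1$. Since $\mathrm{d}_{\mathrm{fr}}(\mC)=\delta$, we get $\mC\cap W=\{\boldsymbol{0}\}$. Hence $\dim\mC+\dim W\le \dim \mathrm{U}(n,\F_q)$, that is, $\mathrm{codim}_{\F_q}(\mC)\ge \dim W$.

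It suffices to take $W$ to be a coordinate subspace: all matrices supported on a fixed set $P$ of positions $(r,c)$ with $r\le c$. Equivalently, the projection of $\mC$ onto the complementary coordinates is injective. The whole proof then reduces to choosing $P$ with $|P|=m+ab$ and $\max_{M\in W}\mathrm{fr}(M)\le m$. Here $m:=\delta-1$, $a:=\lfloor\sqrt m\rfloor$, and $b:=\lfloor(\sqrt{4m+1}-1)/2\rfloor$ is the largest integer with $b(b+1)\le m$.

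\textbf{Key tool.} For $M$ supported on $P$, the block $M_{[i]}$ is supported on $P\cap\{r\le i\le c\}$. Its rank is therefore at most $\min(\rho_i(P),\gamma_i(P))$, where $\rho_i(P)$ and $\gamma_i(P)$ count the distinct rows and columns of $P$ meeting the region $\{r\le i\le c\}$. This gives
\[
\mathrm{fr}(M)\le \sum_{i=1}^n \min\bigl(\rho_i(P),\gamma_i(P)\bigr),
\]
a purely combinatorial quantity, and the task becomes an extremal problem on staircase-shaped subsets of the upper triangle.

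\textbf{Choice of $P$.} I would look for $P$ as a Young-diagram-like shape near the diagonal, so that only few indices $i$ see it. Concretely, take a rectangular block of $a$ consecutive rows and $b$ consecutive columns placed so that each $M_{[i]}$ meets it in at most a few rows or columns, and fill the remaining budget with positions whose contributions to the $M_{[i]}$ add up to at most $m$. The counts $a=\lfloor\sqrt m\rfloor$ and $b(b+1)\le m$ suggest two regimes:
\begin{itemize}[label=\textbullet]
\item a square of side $a$, where the rank sum is about $a^2\le m$;
\item an $b\times(b+1)$ rectangle, where the rank sum is about $b(b+1)\le m$;
\end{itemize}
in each case glued to diagonal entries that raise the dimension to $m+ab$ while keeping the weight sum at most $m$.

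\textbf{Steps in order.}
\begin{enumerate}
\item Prove the intersection/injectivity reduction above.
\item Prove the row/column bound on $\mathrm{rk}(M_{[i]})$ for supported matrices.
\item Define $P$ explicitly, assuming $n$ is large enough for $P$ to fit in the upper triangle.
\item Compute $|P|=m+ab$ and check that $\sum_i\min(\rho_i,\gamma_i)\le m$ by splitting the sum over $i$ into the indices meeting the rectangle and those meeting only the extra diagonal positions.
\end{enumerate}

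The main obstacle is step 3: finding the right shape so that the dimension is exactly $m+ab$ while the weight stays at most $m$. A naive corner rectangle fails because its contribution grows with $n$. What I would try first is to place the block so that the indices $i$ seeing it are confined to a window of length about $a+b$. I would then check small cases ($m=1,2,3,4,6$) by hand against the formula to pin the shape down before writing the general verification.
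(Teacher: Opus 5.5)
\textbf{There is a genuine gap.} Your reduction is correct. If $W$ is a coordinate subspace whose nonzero elements all have flag-rank weight at most $m=\delta-1$, then $\mC\cap W=\{\boldsymbol{0}\}$ and $\mathrm{codim}_{\F_q}(\mC)\ge\dim W$. The estimate $\rk(M_{[i]})\le\min(\rho_i(P),\gamma_i(P))$ is exactly the tool needed to verify a candidate $W$. The paper does not reprove the statement; it quotes it from \cite[Theorem~3.3]{alfarano2024maximum}, so your argument has to stand on its own.

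The gap is that the whole content of the theorem is the existence of a support $P$ with $|P|=m+ab$ and weight at most $m$, and you leave exactly this step open. What you have is a strategy, not a proof. The shapes you sketch also cannot work:
\begin{itemize}
\item The surplus $|P|-m=ab$ must come entirely from the core shape. Diagonal positions added away from it raise dimension and weight equally, so they contribute no surplus.
\item A rectangle above the diagonal has no surplus either. An $a\times b$ rectangle whose lower-left corner is a diagonal position has $ab$ positions and maximum weight exactly $ab$, and moving it away from the diagonal only increases the weight.
\end{itemize}
Finally, ``assuming $n$ is large enough'' is not admissible. The bound is claimed for every $n$, and the tight cases are those with $\delta$ close to the maximum weight $w_n:=\lceil n/2\rceil\lceil (n+1)/2\rceil$. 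For example, $n=5$ and $\delta=9$ require a $12$-dimensional $W$ inside the $15$-dimensional $\mathrm{U}(5,\F_q)$. A triangular block on consecutive indices plus isolated diagonal positions outside it yields at most $11$ positions there.

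\textbf{How to close the gap.} The surplus is carried by a full upper triangular block, i.e.\ a rectangle together with the two triangles flanking it.
\begin{itemize}
\item Since $b^2\le b(b+1)\le m$ and $a^2\le m<(b+1)(b+2)$, we have $a\in\{b,b+1\}$. Put $k=a+b$.
\item Then $w_k=a(b+1)\le m<w_{k+1}$ and $\binom{k+1}{2}=a(b+1)+ab$. Your row/column bound gives $\sum_{i=1}^{k}\min(i,k-i+1)=w_k$, so the triangular block on $k$ consecutive indices has surplus exactly $ab$.
\item To absorb the remaining $m-w_k$ units of weight for every $n$, take the block on the indices $1,\dots,k+1$. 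It fits, because $w_k\le m<\delta\le w_n$ (for $\mC\neq\{\boldsymbol{0}\}$) forces $k<n$.
\item Delete the positions $(r,c_0)$, $1\le r\le c_0$, of a single column. The column count of $M_{[i]}$ drops by one exactly for $i\le c_0$. This lowers $\min(i,k+2-i)$ exactly when also $i\ge\lceil (k+2)/2\rceil$.
\item Hence the weight bound becomes $w_{k+1}-(c_0-\lceil (k+2)/2\rceil+1)$, at a cost of $c_0$ positions.
\item Choose $c_0=\lceil (k+2)/2\rceil-1+(w_{k+1}-m)$. This is at most $k+1$, because $w_{k+1}-m\le w_{k+1}-w_k=\lfloor k/2\rfloor+1$.
\item This choice gives weight at most $m$ and $|P|=\binom{k+2}{2}-c_0=m+ab$, as required.
\end{itemize}
If $n\ge k+m-w_k$, you may instead add $m-w_k$ isolated diagonal positions outside the $k$-block.
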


\begin{definition}
A linear flag-rank-metric code is called a \textbf{maximum flag-rank-distance code}, or simply an \textbf{MFRD code}, if its parameters attain the Singleton-like bound of Eq.~\eqref{eq:sing-bound} with equality.
\end{definition}

The first examples of MFRD codes were obtained by Fourier and Nebe in
\cite{fourier2021degenerate}, who constructed MFRD codes for
$
\delta=\left\lceil \frac{n}{2} \right\rceil\cdot \left\lceil \frac{n+1}{2} \right\rceil
$
when $n$ is odd. In \cite{alfarano2024maximum}, this construction was
extended to the case where $n$ is even. Moreover, in \cite{alfarano2024maximum}, further families of
MFRD codes were obtained for
$
\delta=\left\lceil \frac{n}{2} \right\rceil\cdot \left\lceil \frac{n+1}{2} \right\rceil -1
$
when $n$ is odd. The same work also contains a classification of MFRD
codes with $\delta=2$, a characterization of MFRD codes with $\delta=3$
via the notion of \emph{support-avoiding codes} in the Hamming metric, and a
construction of MFRD codes with $\delta=4$ using auxiliary MDS codes in the Hamming metric and MRD codes in the rank metric. In particular,
MFRD codes are known to exist for several values of the minimum flag-rank distance, but a complete existence theory is still not available.

%%%%%%%%%%%%%%%%%%%%%%%%%%%%%%%%%%%%%%%%%%%%%%%%%%%%%%%%%%%%%%%%%%%%%%%%%%%%%%%%%

\section{On the size of balls and spheres in the flag-rank metric}\label{sec:balls}

We dedicate this section to the study of balls and spheres centered at matrices in $\mathrm{U}(n,\F_q)$, with respect to the flag-rank metric.
For a matrix $M\in \mathrm{U}(n,\F_q)$, we define the \textbf{flag-rank sphere of radius $t\in\mathbb N_0$ centered at~$M$} as
\[
S(M,t)=\{N\in \mathrm{U}(n,\F_q): \mathrm{d}_{\mathrm{fr}}(N,M)=t\},
\]
and the \textbf{flag-rank ball of radius $t\in\mathbb N_0$ centered at $M$} as
\[
B(M,t)=\{N\in \mathrm{U}(n,\F_q): \mathrm{d}_{\mathrm{fr}}(N,M)\le t\}.
\]
Clearly,
\[
B(M,t)=\bigcup_{i=0}^{t}S(M,i).
\]
Since $\mathrm{d}_{\mathrm{fr}}(N,M)=\mathrm{fr}(N-M)$, the size of balls and spheres does not depend on their center. Hence, throughout this section, we use the following notation:
\[
s_q(n,t):=|S(\boldsymbol{0},t)|
=
\left|\{A\in \mathrm{U}(n,\F_q):\mathrm{fr}(A)=t\}\right|,
\]
and
\[
b_q(n,t):=|B(\boldsymbol{0},t)|
=
\left|\{A\in \mathrm{U}(n,\F_q):\mathrm{fr}(A)\le t\}\right|.
\]
Thus
\[
b_q(n,t)=\sum_{i=0}^{t}s_q(n,i).
\]
In particular, determining the sizes of balls amounts to determining the values of $s_q(n,i)$.

We first observe that, for every $n\in\mathbb N$, the only matrix in $\mathrm{U}(n,\F_q)$ having flag-rank weight equal to zero is the zero matrix $\boldsymbol{0}$. Therefore, $s_q(n,0)=1$.

In the following, we determine the number of matrices of flag-rank weight $1$, $2$, and $3$.

\begin{proposition}
Let $M=(a_{i,j})\in \mathrm{U}(n,\F_q)$. Then $\mathrm{fr}(M)=1$ if and only if $M$ is diagonal and has exactly one nonzero diagonal entry. In particular, $s_q(n,1)=n(q-1)$.
\end{proposition}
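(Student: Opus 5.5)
The plan is to understand which submatrices $M_{[i]}$ contain a given entry $a_{i,j}$ with $i\le j$. The submatrix $M_{[k]}$ consists of rows $1,\dots,k$ and columns $k,\dots,n$. Hence $a_{i,j}$ lies in $M_{[k]}$ exactly when $i\le k\le j$. So every entry lies in at least one submatrix $M_{[k]}$, and the entry $a_{i,j}$ lies in exactly $j-i+1$ of them.

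For the direction ``if'': suppose $M$ is diagonal with a single nonzero entry $a_{i,i}$. By the observation above, $a_{i,i}$ lies only in $M_{[i]}$, so $\rk(M_{[i]})=1$. Every other $M_{[k]}$ is the zero matrix, because it contains no nonzero entry of $M$. Therefore $\mathrm{fr}(M)=1$.

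For the direction ``only if'': suppose $\mathrm{fr}(M)=1$. Then $M\neq\boldsymbol{0}$, and if some entry $a_{i,j}$ is nonzero, every $M_{[k]}$ with $i\le k\le j$ has rank at least $1$. This gives $\mathrm{fr}(M)\ge j-i+1$, so every nonzero entry must satisfy $j=i$, and $M$ is diagonal. Moreover, distinct nonzero diagonal entries $a_{i,i}$ and $a_{k,k}$ with $i\neq k$ make both $M_{[i]}$ and $M_{[k]}$ nonzero, which forces $\mathrm{fr}(M)\ge 2$. Hence exactly one diagonal entry is nonzero.

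For the count, there are $n$ choices for the position of the nonzero diagonal entry and $q-1$ choices for its value, so $s_q(n,1)=n(q-1)$.

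The only real step is the containment observation $a_{i,j}\in M_{[k]}\iff i\le k\le j$, which follows directly from the definition of $M_{[k]}$. After that, nothing presents a genuine obstacle.
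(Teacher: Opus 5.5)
Your proof is correct and follows essentially the same route as the paper. A nonzero off-diagonal entry $a_{i,j}$ forces the distinct blocks $M_{[i]},\dots,M_{[j]}$ to be nonzero, two nonzero diagonal entries force two nonzero blocks, and the converse and the count $n(q-1)$ are immediate; your explicit criterion $a_{i,j}\in M_{[k]}\iff i\le k\le j$ just states precisely what the paper uses implicitly.
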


\begin{proof}
Assume first that $\mathrm{fr}(M)=1$. Suppose that $a_{i,j}\neq0$ for some $i<j$. Then the entry $a_{i,j}$ appears in each of the submatrices $M_{[i]},M_{[i+1]},\ldots,M_{[j]}$.
In particular, both $M_{[i]}$ and $M_{[j]}$ are nonzero, and hence $\rk(M_{[i]})\ge1$, and $\rk(M_{[j]})\ge 1$.
This gives $\mathrm{fr}(M)\ge 2$, a contradiction. Therefore all entries of $M$ outside the main diagonal are zero.

Now suppose that two distinct diagonal entries, say $a_{i,i}$ and $a_{j,j}$ with $i\neq j$, are nonzero. Then $M_{[i]}$ and $M_{[j]}$ both have rank at least one, again implying $\mathrm{fr}(M)\ge2$. Hence exactly one diagonal entry of $M$ is nonzero.

Conversely, if $M$ is diagonal and has exactly one nonzero diagonal entry, then exactly one of the submatrices $M_{[i]}$ has rank one, while all the others have rank zero. Thus $\mathrm{fr}(M)=1$.

There are $n$ choices for the position of the nonzero diagonal entry and $q-1$ choices for its value. Therefore, $s_q(n,1)=n(q-1)$.
\end{proof}

\begin{proposition}\label{fr2z;general}
Let $n\ge2$. Then
\[
s_q(n,2)
=
(n-1)q^2(q-1)+\binom{n}{2}(q-1)^2.
\]
\end{proposition}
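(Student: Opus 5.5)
We will characterize all $M\in\mathrm{U}(n,\F_q)$ with $\mathrm{fr}(M)=2$ and count them. The key observation, already used in the previous proposition, is that a nonzero entry $a_{i,j}$ with $i\le j$ lies in every submatrix $M_{[i]},\dots,M_{[j]}$. Each of these $j-i+1$ submatrices then has rank at least one, so $\mathrm{fr}(M)\ge j-i+1$ (the number of indices $k$ with $M_{[k]}\neq 0$). Hence if $\mathrm{fr}(M)=2$, every nonzero entry lies on the diagonal or on the first superdiagonal, and the set of indices $k$ with $M_{[k]}\ne0$ has size at most $2$. If that set has size $1$, then the corresponding $M_{[k]}$ has rank $2$; since all other blocks vanish, all entries lie on the diagonal, and $M_{[k]}$ then contains only the single diagonal entry $a_{k,k}$, so its rank is at most $1$, a contradiction. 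So exactly two indices $k$ have $M_{[k]}\neq0$, each of rank one.

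We split into two cases. \emph{Case A:} $M$ has a nonzero superdiagonal entry $a_{i,i+1}$. Then the two nonzero blocks are $M_{[i]}$ and $M_{[i+1]}$. Consequently all other diagonal entries and all other superdiagonal entries vanish: a nonzero $a_{k,k+1}$ with $k\ne i$ or a nonzero $a_{k,k}$ with $k\notin\{i,i+1\}$ would make a third block nonzero. Thus the only possibly nonzero entries are $a_{i,i},a_{i,i+1},a_{i+1,i+1}$, with $a_{i,i+1}\ne0$. Inside $M_{[i]}$ these appear as the entries $a_{i,i},a_{i,i+1}$ of one row, and inside $M_{[i+1]}$ the entries $a_{i,i+1},a_{i+1,i+1}$ appear in one column, so both blocks have rank exactly one. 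Hence every such choice works, which gives $(n-1)$ positions times $(q-1)q^2$ choices of values. \emph{Case B:} no superdiagonal entry is nonzero, so $M$ is diagonal. Then $\mathrm{fr}(M)$ equals the number of nonzero diagonal entries, so there are exactly two of them, giving $\binom n2(q-1)^2$ matrices.

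Summing the two disjoint cases yields the formula. The step needing most care is the rank computation in Case A. One must identify exactly where $a_{i,i},a_{i,i+1},a_{i+1,i+1}$ sit inside $M_{[i]}$ (row $i$, columns $i$ and $i+1$) and inside $M_{[i+1]}$ (column $i+1$, rows $i$ and $i+1$). This confirms that each block is a single nonzero row or column, and hence has rank one regardless of the diagonal values.
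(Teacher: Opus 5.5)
Your proof is correct and follows the same route as the paper. You split according to whether $M$ is diagonal or has a nonzero first-superdiagonal entry, use that $a_{i,j}$ lies exactly in $M_{[i]},\dots,M_{[j]}$, and count $(n-1)q^2(q-1)+\binom{n}{2}(q-1)^2$; you are slightly more careful than the paper, which simply asserts that exactly two blocks have rank one, whereas you rule out a single rank-two block and check that every Case A configuration really has weight $2$.
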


\begin{proof}
Let $M\in \mathrm{U}(n,\F_q)$ with $\mathrm{fr}(M)=2$. Since the flag-rank weight is the sum of the ranks of the submatrices $M_{[i]}$, exactly two of these submatrices have rank one and all the others have rank zero. We distinguish two cases.

First, suppose that all the entries of $M$ outside the main diagonal are zero. Then $\mathrm{fr}(M)=2$ if and only if exactly two diagonal entries are nonzero. This gives
\[
\binom{n}{2}(q-1)^2
\]
matrices.

Now suppose that $M$ has a nonzero entry outside the main diagonal. If $m_{i,j}\neq0$ with $j-i\ge2$, then this entry belongs to at least three submatrices, namely $M_{[i]},M_{[i+1]},\ldots,M_{[j]}$.
Hence $\mathrm{fr}(M)\ge3$, a contradiction. Thus the only possible nonzero entries outside the main diagonal lie on the first superdiagonal.
Let $m_{i,i+1}\neq0$ for some $i\in\{1,\ldots,n-1\}$. This entry belongs exactly to the two submatrices $M_{[i]}$ and $M_{[i+1]}$. Since $\mathrm{fr}(M)=2$, these must be the only nonzero submatrices, and both must have rank one. Therefore all entries of $M$ are zero except possibly $m_{i,i}, m_{i,i+1}, m_{i+1,i+1}$,
with $m_{i,i+1}\neq0$. For each fixed $i$, there are $q^2(q-1)$ such matrices. Since there are $n-1$ choices for $i$, this case contributes
\[
(n-1)q^2(q-1)
\]
matrices. The two cases are disjoint, and the formula follows by summing them.
\end{proof}

\begin{proposition}\label{fr:3}
Let $n\ge3$. Then
\[
s_q(n,3)
=
(n-2)q^4(q-1)
+
(n-1)(n-2)q^2(q-1)^2
+
\binom{n}{3}(q-1)^3.
\]
\end{proposition}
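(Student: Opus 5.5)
The plan is to refine the case analysis used in Proposition~\ref{fr2z;general}. It rests on two observations. First, an entry $m_{i,j}$ with $i\le j$ lies in exactly the $j-i+1$ submatrices $M_{[i]},\dots,M_{[j]}$. Second, let $S=\{k: M_{[k]}\neq \boldsymbol 0\}$. Since $\mathrm{fr}(M)=\sum_k \rk(M_{[k]})\ge |S|$, the condition $\mathrm{fr}(M)=3$ forces $|S|\le 3$.

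\textbf{Step 1.} From $|S|\le 3$, every nonzero entry satisfies $j-i\le 2$, and the whole interval $\{i,\dots,j\}$ lies in $S$.

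\textbf{Step 2.} I will show that $\mathrm{fr}(M)=3$ forces $|S|=3$ with each $\rk(M_{[k]})=1$. The goal is to rule out a rank $2$ submatrix when $|S|\le 2$.
\begin{itemize}
\item If $|S|=1$, then $M$ is a single diagonal entry, so the rank is at most $1$.
\item If $|S|=2$ and some off-diagonal entry is nonzero, then $S=\{i,i+1\}$. The only possible nonzero entries are $m_{i,i},m_{i,i+1},m_{i+1,i+1}$. They sit in the single row $M_{[i]}$ and the single column $M_{[i+1]}$, so both ranks are at most $1$.
\item If $M$ is diagonal, every $M_{[k]}$ has rank at most $1$.
\end{itemize}
Hence $\mathrm{fr}(M)=3$ holds if and only if $|S|=3$ and every $M_{[k]}$ with $k\in S$ has rank exactly $1$.

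\textbf{Step 3.} I classify by the longest off-diagonal entry. There are three cases.
\begin{itemize}
\item \textbf{Some $m_{i,i+2}\neq0$.} Then $S=\{i,i+1,i+2\}$, so all nonzero entries lie among the six entries indexed within $\{i,i+1,i+2\}$. The rows $M_{[i]}$ and columns $M_{[i+2]}$ automatically have rank $1$. The only constraint is that the $2\times2$ nonzero part of $M_{[i+1]}$,
\[
\begin{pmatrix} m_{i,i+1} & m_{i,i+2}\\ m_{i+1,i+1} & m_{i+1,i+2}\end{pmatrix},
\]
has rank $1$. Because $m_{i,i+2}\neq 0$, this determines $m_{i+1,i+2}$ uniquely from the other three free entries. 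That gives $q^4(q-1)$ matrices for each of the $n-2$ choices of $i$.
\item \textbf{No distance-$2$ entries, but some $m_{i,i+1}\neq 0$.} Two adjacent nonzero superdiagonal entries $m_{i,i+1},m_{i+1,i+2}$ would make $M_{[i+1]}$ contain $\begin{pmatrix} a&0\\ d&e\end{pmatrix}$ with $a,e\neq0$. That has rank $2$, forcing $\mathrm{fr}\ge4$. Two non-adjacent superdiagonal entries give $|S|=4$. So exactly one superdiagonal entry is nonzero. Its block $\{i,i+1\}$ contributes $q^2(q-1)$ choices, as in Proposition~\ref{fr2z;general}. The third element of $S$ must come from a single nonzero diagonal entry $m_{k,k}$ with $k\notin\{i,i+1\}$. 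Placing such a $k$ adjacent to the block is harmless, since it only adds a separate row or column. This gives $(n-1)(n-2)q^2(q-1)^2$.
\item \textbf{$M$ diagonal.} Exactly three diagonal entries are nonzero, giving $\binom n3(q-1)^3$.
\end{itemize}

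The three cases are disjoint and summing them gives the formula. I expect the main obstacle to be Step 2 together with the exclusion of adjacent superdiagonal pairs. These are where one must check that no rank-$2$ configuration compensates for a smaller support. I would handle both by writing out the explicit $2\times 2$ nonzero part of $M_{[i+1]}$ in each configuration and checking its rank directly.
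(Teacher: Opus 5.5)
Your proof is correct and follows the paper's route. Both arguments split according to the highest nonzero superdiagonal (second, first, or only the main diagonal). Your Step~2 simply makes explicit what the paper uses implicitly: $\mathrm{fr}(M)=3$ forces exactly three nonzero blocks $M_{[k]}$, each of rank one.

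One justification in the case $m_{i,i+2}\neq0$ is wrong, although the count you obtain is right.

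\begin{itemize}
\item The rank-one condition on $M_{[i+1]}$ is $m_{i,i+1}m_{i+1,i+2}=m_{i,i+2}m_{i+1,i+1}$. This determines $m_{i+1,i+1}=m_{i,i+1}m_{i+1,i+2}/m_{i,i+2}$, not $m_{i+1,i+2}$.
\item To see that $m_{i+1,i+2}$ is not determined, take $m_{i,i+1}=0$. Then $m_{i+1,i+2}$ is free and $m_{i+1,i+1}$ is forced to be $0$.
\item The fix is to take $m_{i,i}$, $m_{i,i+1}$, $m_{i+1,i+2}$, $m_{i+2,i+2}\in\F_q$ and $m_{i,i+2}\in\F_q^*$ as the free entries. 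This gives $q^4(q-1)$, matching the paper's parametrization $(m_{i+1,i+1},m_{i+1,i+2})=\lambda(m_{i,i+1},m_{i,i+2})$.
\end{itemize}
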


\begin{proof}
Let $M\in \mathrm{U}(n,\F_q)$ with $\mathrm{fr}(M)=3$. We classify the possible matrices according to the highest nonzero diagonal of $M$; see Figure~\ref{fig:fr3-configurations} for a visual explanation.
\begin{enumerate}
    \item [(a)] \emph{Only the main diagonal is involved.}
Suppose first that all entries outside the main diagonal of $M$ are zero. Then $\mathrm{fr}(M)=3$ if and only if exactly three diagonal entries are nonzero. This gives
\[
\binom{n}{3}(q-1)^3
\]
matrices.

\item [(b)] \emph{The first superdiagonal is involved, but the second superdiagonal is not.}
Assume that $m_{i,i+1}\neq0$ for some $i\in\{1,\ldots,n-1\}$ and that all entries on higher superdiagonals are zero. The entry $m_{i,i+1}$ belongs exactly to $M_{[i]}$ and $M_{[i+1]}$.
No other first-superdiagonal entry can be nonzero. Indeed, if an adjacent entry, say $m_{i+1,i+2}$, were nonzero, then $M_{[i+1]}$ would contain two independent nonzero positions, forcing $\rk(M_{[i+1]})\ge2$. Together with the nonzero blocks $M_{[i]}$ and $M_{[i+2]}$, this would imply $\mathrm{fr}(M)\ge4$. If a non-adjacent first-superdiagonal entry were nonzero, then at least four submatrices $M_{[j]}$ would be nonzero, again giving $\mathrm{fr}(M)\ge4$.
Thus there is exactly one nonzero entry on the first superdiagonal. 

To obtain flag-rank weight $3$, we must add exactly one nonzero diagonal entry outside the positions $i$ and $i+1$. The entries $m_{i,i}$ and $m_{i+1,i+1}$ may be chosen arbitrarily, since they do not increase the ranks of $M_{[i]}$ and $M_{[i+1]}$ beyond one.

Hence, for each fixed pair $(i,j)$, with $i\in\{1,\ldots,n-1\}$ and $j\in\{1,\ldots,n\}\setminus\{i,i+1\}$,
there are $q^2(q-1)^2$ matrices. Therefore this case contributes
\[
(n-1)(n-2)q^2(q-1)^2
\]
matrices.

\item[(c)] \emph{The second superdiagonal is involved.}
Assume that $m_{i,i+2}\neq0$ for some $i\in\{1,\ldots,n-2\}$. This entry belongs exactly to the three submatrices $M_{[i]}, M_{[i+1]}, M_{[i+2]}$.
Since $\mathrm{fr}(M)=3$, these must be the only nonzero submatrices, and each of them must have rank one.
Therefore all entries outside the rows and columns indexed by $i,i+1,i+2$ must be zero. Equivalently, the only entries that may be
nonzero are those in the upper triangular $3\times 3$ block supported
on the consecutive indices $i,i+1,i+2$, namely
$$
\begin{pmatrix}
m_{i,i} & m_{i,i+1} & m_{i,i+2}\\
0 & m_{i+1,i+1} & m_{i+1,i+2}\\
0 & 0 & m_{i+2,i+2}
\end{pmatrix},
$$
with $m_{i,i+2}\neq0$.

The condition that $M_{[i+1]}$ has rank one is equivalent to requiring
\[
(m_{i+1,i+1},m_{i+1,i+2})
=
\lambda(m_{i,i+1},m_{i,i+2})
\]
for some $\lambda\in\F_q$. Thus, for each fixed $i$, we may choose $m_{i,i+2}\in\F_q^*$, and $m_{i,i}$, $m_{i,i+1}$, $m_{i+2,i+2}$, $\lambda\in\F_q$.
This gives $q^4(q-1)$ matrices for each $i$. Since there are $n-2$ choices for $i$, this case contributes
\[
(n-2)q^4(q-1)
\]
matrices.

\end{enumerate}

\noindent
The three cases are disjoint and exhaustive, so the formula follows.
\end{proof}

\begin{figure}[H]
\centering
\small

\[
\begin{array}{c@{\qquad}c@{\qquad}c}
\textnormal{(a)} & \textnormal{(b)} & \textnormal{(c)} \\[4pt]
\begin{pmatrix}
\ast &        &        &        &        \\
0     & \ast  &        &        &        \\
0     & 0     & \ast  &        &        \\
0     & 0     & 0     & 0      &        \\
0     & 0     & 0     & 0      & 0
\end{pmatrix}
&
\begin{pmatrix}
\circ & \ast  &        &        &        \\
0     & \circ &        &        &        \\
0     & 0     & 0      &        &        \\
0     & 0     & 0      & \ast  &        \\
0     & 0     & 0      & 0      & 0
\end{pmatrix}
&
\begin{pmatrix}
x     & a         & b         &        &        \\
0     & \lambda a & \lambda b &        &        \\
0     & 0         & y         &        &        \\
0     & 0         & 0         & 0      &        \\
0     & 0         & 0         & 0      & 0
\end{pmatrix}
\\[6pt]
\begin{array}{c}
\text{three nonzero}\\
\text{diagonal entries}
\end{array}
&
\begin{array}{c}
\text{one nonzero first-}\\
\text{superdiagonal entry}\\
\text{and one isolated}\\
\text{nonzero diagonal entry}
\end{array}
&
\begin{array}{c}
\text{one nonzero second-}\\
\text{superdiagonal entry,}\\
\text{with } b\ne0,\ x,a,y,\lambda\in\F_q
\end{array}
\end{array}
\]

\caption{The three configurations contributing to $s_q(n,3)$. Here \(\ast\) denotes a nonzero entry, \(\circ\) denotes an arbitrary element of \(\F_q\), and blank entries are zero.}
\label{fig:fr3-configurations}
\end{figure}

We then get the following immediate size for the flag-rank balls of radius at most $3$.

\begin{corollary}\label{cor:ballsize}
For $n\ge3$, the sizes of the balls of flag-rank radius $0$, $1$, $2$, and $3$ in $\mathrm{U}(n,\F_q)$ are given by
\[
b_q(n,0)=1,
\]
\[
b_q(n,1)=1+n(q-1),
\]
\[
b_q(n,2)
=
1+n(q-1)
+
(n-1)q^2(q-1)
+
\binom{n}{2}(q-1)^2,
\]
and
\[
\begin{aligned}
b_q(n,3)
={}&
1+n(q-1)
+
(n-1)q^2(q-1)
+
\binom{n}{2}(q-1)^2  \\
&+
(n-2)q^4(q-1)
+
(n-1)(n-2)q^2(q-1)^2
+
\binom{n}{3}(q-1)^3.
\end{aligned}
\]
\end{corollary}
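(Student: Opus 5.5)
The plan is to use the identity $b_q(n,t)=\sum_{i=0}^{t}s_q(n,i)$, recorded at the start of this section, and substitute the sphere sizes computed above. Concretely, I would take $s_q(n,0)=1$ from the observation that only $\boldsymbol{0}$ has flag-rank weight zero. Next I would take $s_q(n,1)=n(q-1)$ from the first proposition of the section, $s_q(n,2)$ from Proposition~\ref{fr2z;general} (valid for $n\ge2$), and $s_q(n,3)$ from Proposition~\ref{fr:3} (valid for $n\ge3$).

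The key steps, in order, are as follows. First, $b_q(n,0)=s_q(n,0)=1$. Then $b_q(n,1)=b_q(n,0)+s_q(n,1)$, and after that $b_q(n,2)=b_q(n,1)+s_q(n,2)$. Finally, $b_q(n,3)=b_q(n,2)+s_q(n,3)$. Each step is simply the addition of the next explicit sphere size to the previous ball size, and the displayed formulas are exactly these partial sums written out without simplification.

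The only point requiring attention is the range of validity. The hypothesis $n\ge3$ is imposed so that both Proposition~\ref{fr2z;general} and Proposition~\ref{fr:3} apply simultaneously. Since $n\ge3$ implies $n\ge2$, all four formulas hold under the single assumption $n\ge3$, and no step presents a genuine obstacle.
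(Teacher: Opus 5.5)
Your proposal is correct and matches the paper, which presents this corollary as an immediate consequence of $b_q(n,t)=\sum_{i=0}^{t}s_q(n,i)$, combined with $s_q(n,0)=1$, $s_q(n,1)=n(q-1)$, and Propositions~\ref{fr2z;general} and~\ref{fr:3}. Your remark on the range of validity is also accurate: the hypothesis $n\ge3$ is needed only so that Proposition~\ref{fr:3} applies.
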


We now derive some general bounds on $s_q(n,t)$. The key observation is that matrices of small flag-rank weight cannot have nonzero entries too far from the main diagonal. We shall use the following result.

 \begin{proposition}{\cite[Lemma~4.12]{alfarano2024maximum}}\label{pro:nonzero}
Let $M\in \mathrm{U}(n,\F_q)$ and let $\delta=\mathrm{fr}(M)$. If $\delta<n$, then all nonzero entries of $M$ lie on the first $\delta$ diagonals.
\end{proposition}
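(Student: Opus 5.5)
The strategy is to track, for each nonzero entry, how many of the submatrices $M_{[i]}$ contain it, and then use that each such submatrix contributes at least rank one. First we make the indexing precise. With the convention that the main diagonal is the first diagonal, the $d$-th diagonal consists of the entries $m_{i,j}$ with $j-i=d-1$, and the claim is that $m_{i,j}\neq 0$ forces $j-i\le \delta-1$.

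\textbf{Step 1: which submatrices contain a given entry.} The block $M_{[k]}$ consists of rows $1,\dots,k$ and columns $k,\dots,n$. So an entry $m_{i,j}$ with $i\le j$ lies in $M_{[k]}$ exactly when $i\le k\le j$. As in the proofs of Propositions~\ref{fr2z;general} and~\ref{fr:3}, this means $m_{i,j}$ belongs to precisely the $j-i+1$ submatrices $M_{[i]},M_{[i+1]},\dots,M_{[j]}$.

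\textbf{Step 2: the weight bound.} Suppose $m_{i,j}\neq0$. Each of the $j-i+1$ submatrices in Step 1 contains a nonzero entry, so each has rank at least $1$. Since all ranks are nonnegative, we get
\[
\delta=\mathrm{fr}(M)=\sum_{k=1}^n\rk(M_{[k]})\ \ge\ \sum_{k=i}^{j}\rk(M_{[k]})\ \ge\ j-i+1 .
\]
Hence $j-i\le\delta-1$, so $m_{i,j}$ lies on one of the diagonals $1,\dots,\delta$.

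\textbf{Role of the hypothesis $\delta<n$.} The only real subtlety is the indexing convention, and this is where $\delta<n$ enters. There are only $n$ diagonals, so for $\delta\ge n$ the statement is vacuous. For $\delta<n$ it is a genuine restriction: the entries $m_{i,j}$ with $j-i\ge\delta$ must vanish. No further obstacle is expected, because the argument is the same counting already used in Propositions~\ref{fr2z;general} and~\ref{fr:3}, now carried out for a general distance $j-i$.
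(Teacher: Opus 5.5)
Your proof is correct. The entry $m_{i,j}$ lies in exactly the $j-i+1$ blocks $M_{[i]},\dots,M_{[j]}$, each of which then has rank at least one, so $j-i+1\le\delta$; as you note, the hypothesis $\delta<n$ is not used and only makes the statement non-vacuous. The paper does not reprove this result (it cites \cite[Lemma~4.12]{alfarano2024maximum}), but your argument is the same counting it uses in the proofs of Propositions~\ref{fr2z;general}, \ref{fr:3} and~\ref{upperbound}, so your approach is essentially the paper's.
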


\begin{proposition}\label{upper}
Let $\delta$ be an integer with $1\le \delta\le n$. Then
\[
s_q(n,\delta)
\le
q^{\frac{\delta(2n-\delta+1)}{2}}.
\]
\end{proposition}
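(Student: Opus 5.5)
The plan is to bound $s_q(n,\delta)$ by the size of a subspace of $\mathrm{U}(n,\F_q)$ that contains every matrix of flag-rank weight $\delta$. The natural candidate is the subspace of matrices supported on the first $\delta$ diagonals, meaning the main diagonal together with the first $\delta-1$ superdiagonals. The $k$-th of these diagonals, for $k=0,\ldots,\delta-1$, has $n-k$ positions. The total number of positions is therefore
\[
\sum_{k=0}^{\delta-1}(n-k)=\delta n-\frac{\delta(\delta-1)}{2}=\frac{\delta(2n-\delta+1)}{2},
\]
which is exactly the exponent in the statement.

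I will split into two cases.

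\emph{Case $\delta<n$.} Proposition~\ref{pro:nonzero} applies directly: every $M$ with $\mathrm{fr}(M)=\delta$ has all its nonzero entries on the first $\delta$ diagonals. Hence the sphere $S(\boldsymbol{0},\delta)$ is contained in the $\F_q$-subspace of upper triangular matrices supported on those diagonals. That subspace has dimension $\frac{\delta(2n-\delta+1)}{2}$, so its cardinality is $q^{\delta(2n-\delta+1)/2}$, and the bound follows.

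\emph{Case $\delta=n$.} Here Proposition~\ref{pro:nonzero} is not available. However, the exponent equals $\frac{n(n+1)}{2}=\dim_{\F_q}\mathrm{U}(n,\F_q)$. So the claimed bound is just the trivial inequality $s_q(n,n)\le|\mathrm{U}(n,\F_q)|$.

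There is no real obstacle, since the key structural fact is supplied by Proposition~\ref{pro:nonzero}. The only points needing care are the following.
\begin{itemize}
\item The indexing convention for ``the first $\delta$ diagonals'' must include the main diagonal. This is consistent with the case $\delta=1$, where weight-one matrices are diagonal.
\item The arithmetic-series computation of the number of positions.
\item The boundary case $\delta=n$, which is handled separately as above.
\end{itemize}
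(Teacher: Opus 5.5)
Your proof is correct and follows the paper's own argument: for $\delta<n$ you use Proposition~\ref{pro:nonzero} to confine the sphere to the $q^{\delta(2n-\delta+1)/2}$ matrices supported on the first $\delta$ diagonals. For $\delta=n$ you note the bound is just $|\mathrm{U}(n,\F_q)|$, which the paper states in a remark right after its proof.
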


\begin{proof}
If $\delta<n$, then Proposition~\ref{pro:nonzero} implies that every matrix $M\in \mathrm{U}(n,\F_q)$ with $\mathrm{fr}(M)=\delta$ is zero on all diagonals strictly above the $\delta$-th diagonal. Hence such a matrix is determined by its entries on the first $\delta$ diagonals.
The total number of entries on the first $\delta$ diagonals is
\[
N_\delta
=
n+(n-1)+\cdots+(n-\delta+1)
=
\frac{\delta(2n-\delta+1)}{2}.
\]
Therefore,
\[
s_q(n,\delta)\le q^{N_\delta}
=
q^{\frac{\delta(2n-\delta+1)}{2}}.
\]

\end{proof}
Note that if $\delta=n$, this bound is trivial, since the right-hand side is $q^{\frac{n(n+1)}{2}}=|\mathrm{U}(n,\F_q)|$.

As a consequence of Proposition~\ref{pro:nonzero}, we obtain the following lower bound on the number of zero entries of a matrix of given flag-rank weight.

\begin{corollary}\label{col:zero}
Let $M\in \mathrm{U}(n,\F_q)$ and suppose that $\mathrm{fr}(M)=\delta< n$. Then $M$ has at least
\[
\sum_{i=1}^{n-\delta}i
=
\frac{(n-\delta)(n-\delta+1)}{2}
\]
zero entries.
\end{corollary}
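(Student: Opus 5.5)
The plan is to derive Corollary~\ref{col:zero} directly from Proposition~\ref{pro:nonzero}, by counting the entries of $M$ that lie strictly above the $\delta$-th diagonal. We index the diagonals of an upper triangular $n\times n$ matrix by $d=0,1,\ldots,n-1$: diagonal $d$ consists of the positions $(i,i+d)$ with $1\le i\le n-d$, so it has $n-d$ entries. The ``first $\delta$ diagonals'' in Proposition~\ref{pro:nonzero} are those with $d=0,\ldots,\delta-1$, matching the count $N_\delta=n+(n-1)+\cdots+(n-\delta+1)$ used in the proof of Proposition~\ref{upper}.

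Since $\mathrm{fr}(M)=\delta<n$, Proposition~\ref{pro:nonzero} applies. It says that every entry on a diagonal $d$ with $\delta\le d\le n-1$ is zero. These diagonals are pairwise disjoint, so the number of forced zeros is
\[
\sum_{d=\delta}^{n-1}(n-d)=\sum_{i=1}^{n-\delta} i=\frac{(n-\delta)(n-\delta+1)}{2},
\]
using the substitution $i=n-d$. Equivalently, this is $\frac{n(n+1)}{2}-N_\delta$, the total number of upper triangular positions minus those on the first $\delta$ diagonals.

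There is no real obstacle here. The only care needed is that ``entries'' means entries in the upper triangular positions, not the automatic zeros below the diagonal; with that reading the count gives exactly the stated bound, and it says ``at least'' because entries on the first $\delta$ diagonals may also vanish.
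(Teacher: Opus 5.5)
Your proof is correct and is the paper's argument: apply Proposition~\ref{pro:nonzero} to force every entry strictly above the $\delta$-th diagonal to vanish, then count those positions as $1+2+\cdots+(n-\delta)$. Your explicit diagonal indexing and the cross-check against $N_\delta$ simply spell out the paper's one-line count in more detail.
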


\begin{proof}
  By Proposition~\ref{pro:nonzero}, all entries strictly above the $\delta$-th diagonal are zero. The number of such entries is
\[
1+2+\cdots+(n-\delta)
=
\frac{(n-\delta)(n-\delta+1)}{2}.
\]
\end{proof}

We next obtain another simple restriction on the number of zero entries. We first have the following elementary fact.

\begin{lemma}\label{atleastnentries}
Let $M\in \mathrm{U}(n,\F_q)$. For every $i\in\{1,\ldots,n\}$, the submatrix $M_{[i]}$ contains at least $n$ entries.
\end{lemma}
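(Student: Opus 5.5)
The claim is a direct dimension count. By the Notation, $M_{[i]}$ is obtained from $M$ by deleting the first $i-1$ columns and the last $n-i$ rows. What remains is the $i\times(n-i+1)$ array formed by rows $1,\dots,i$ and columns $i,\dots,n$. So $M_{[i]}$ contains exactly
\[
f(i)=i(n-i+1)
\]
entries, counting positions and including those that are forced to be zero in $M$. The plan is simply to show that $f(i)\ge n$ for every $i\in\{1,\ldots,n\}$.

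For this I would write
\[
f(i)-n=i(n-i+1)-n=(i-1)(n-i),
\]
which one checks by expanding both sides: each equals $in-i^2+i-n$. For $1\le i\le n$ both factors $i-1$ and $n-i$ are nonnegative, so the product is nonnegative and $f(i)\ge n$. Equality holds exactly when $i=1$ or $i=n$, where $M_{[1]}$ is the first row and $M_{[n]}$ is the last column. An equivalent route is to note that $f$ is a concave quadratic in $i$, so its minimum over $[1,n]$ is attained at an endpoint, and there $f(1)=f(n)=n$.

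There is no real obstacle here. The only point to be careful about is reading the definition of $M_{[i]}$ correctly, so that its shape is $i\times(n-i+1)$ and not something else. I would also remark that every position of $M_{[i]}$ lies in the upper triangle, since the row index $r\le i$ and the column index $c\ge i$ give $r\le c$. Hence these $i(n-i+1)$ entries are genuine free coordinates of $M\in\mathrm{U}(n,\F_q)$, which is presumably how the lemma will be used afterwards to bound the number of zero entries.
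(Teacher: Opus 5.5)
Your proposal is correct and matches the paper's proof: both count the $i(n-i+1)$ entries of the $i\times(n-i+1)$ block and use the identity $i(n-i+1)-n=(i-1)(n-i)\ge 0$. Your additional remark that every position of $M_{[i]}$ lies on or above the diagonal is accurate and harmless, but not needed for the lemma itself.
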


\begin{proof}
By definition, $M_{[i]}$ has size $i\times(n-i+1)$, and therefore it contains $i(n-i+1)$ entries. Moreover, $i(n-i+1)-n =(i-1)(n-i)
\ge0$ for every $1\le i\le n$. Hence $i(n-i+1)\ge n$.
\end{proof}

\begin{lemma}\label{onezeroblock}
Let $M\in \mathrm{U}(n,\F_q)$. If $\mathrm{fr}(M)<n$, then there exists $i\in\{1,\ldots,n\}$ such that $M_{[i]}=\boldsymbol{0}$.
\end{lemma}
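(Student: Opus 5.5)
The natural route is a pigeonhole/contrapositive argument. Suppose every block $M_{[i]}$, $i=1,\ldots,n$, is nonzero. Then each of the $n$ summands in $\mathrm{fr}(M)=\sum_{i=1}^n \rk(M_{[i]})$ satisfies $\rk(M_{[i]})\ge 1$, since a nonzero matrix has rank at least one. Summing gives $\mathrm{fr}(M)\ge n$, which contradicts the hypothesis $\mathrm{fr}(M)<n$. Hence at least one index $i$ must have $\rk(M_{[i]})=0$, that is, $M_{[i]}=\boldsymbol{0}$.

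Concretely, I would write the proof in two steps. First, recall Definition~\ref{def:frd}: $\mathrm{fr}(M)$ is a sum of exactly $n$ nonnegative integers $\rk(M_{[1]}),\ldots,\rk(M_{[n]})$. Second, note that a sum of $n$ nonnegative integers that is strictly less than $n$ must have at least one summand equal to $0$. Applying this to the ranks gives the claimed index.

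There is essentially no obstacle here. The only point to state explicitly is that $\rk(N)=0$ if and only if $N$ is the zero matrix, which turns a vanishing rank into a vanishing block. Lemma~\ref{atleastnentries} is not needed for this statement; presumably it is used afterwards, combined with the present lemma, to show that a matrix of weight less than $n$ has at least $n$ zero entries.
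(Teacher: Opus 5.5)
Your proposal is correct and is essentially the paper's own argument: assume every $M_{[i]}$ is nonzero, so each $\rk(M_{[i]})\ge 1$ and $\mathrm{fr}(M)\ge n$, contradicting $\mathrm{fr}(M)<n$. You are also right that Lemma~\ref{atleastnentries} is not needed here; the paper combines the two lemmas only in the subsequent proposition on the number of zero entries.
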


\begin{proof}
Suppose, by contradiction, that $M_{[i]}\neq\boldsymbol{0}$ for every $i\in\{1,\ldots,n\}$. Then $\rk(M_{[i]})\ge1$
for every $i$, and hence
\[
\mathrm{fr}(M)
=
\sum_{i=1}^{n}\rk(M_{[i]})
\ge n,
\]
contrary to the assumption.
\end{proof}

\begin{proposition}
Let $M\in \mathrm{U}(n,\F_q)$. If $\mathrm{fr}(M)<n$, then $M$ has at least $n$ zero entries.
\end{proposition}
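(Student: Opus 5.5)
The plan is to combine the two lemmas just proved. Since $\mathrm{fr}(M)<n$, Lemma~\ref{onezeroblock} provides an index $i\in\{1,\ldots,n\}$ with $M_{[i]}=\boldsymbol{0}$. Every entry of this submatrix is then a zero entry of $M$. By Lemma~\ref{atleastnentries}, $M_{[i]}$ has $i(n-i+1)\ge n$ entries.

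The one point needing care is that the entries of $M_{[i]}$ are genuine positions of $M$, and that we are not counting positions below the diagonal, which are zero trivially. I would check this from the definition. $M_{[i]}$ consists of rows $1,\ldots,i$ and columns $i,\ldots,n$, so each position $(r,c)$ in it satisfies $r\le i\le c$. It therefore lies on or above the main diagonal.

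Hence these $i(n-i+1)$ positions are distinct entries of the upper triangular part of $M$, all equal to zero. This gives at least $n$ zero entries, as claimed. I do not expect a real obstacle; the argument is a direct two-line combination of Lemma~\ref{onezeroblock} and Lemma~\ref{atleastnentries}.
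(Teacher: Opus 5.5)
Your proposal is correct and matches the paper's proof, which likewise takes the index $i$ with $M_{[i]}=\boldsymbol{0}$ from Lemma~\ref{onezeroblock} and counts its $i(n-i+1)\ge n$ entries via Lemma~\ref{atleastnentries}. Your extra check that every position $(r,c)$ of $M_{[i]}$ satisfies $r\le i\le c$ is a small addition the paper leaves implicit. It ensures the zeros counted lie in the upper triangular part of $M$, which is the only reading under which the statement is not trivial.
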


\begin{proof}
By Lemma~\ref{onezeroblock}, there exists $i\in\{1,\ldots,n\}$ such that $M_{[i]}=\boldsymbol{0}$. By Lemma~\ref{atleastnentries}, the submatrix $M_{[i]}$ contains at least $n$ entries of $M$. Therefore $M$ has at least $n$ zero entries.
\end{proof}

We give a bound on the number of nonzero entries on each diagonal. 

\begin{proposition}\label{upperbound}
Let $M\in \mathrm{U}(n,\F_q)$ with $\mathrm{fr}(M)=\delta\leq n$. Then, for every $i\in\{1,\ldots,\delta\}$, the $i$-th diagonal of $M$ contains at most $\delta-i+1$ nonzero entries.
\end{proposition}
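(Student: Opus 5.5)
Convention: the $i$-th diagonal consists of the entries $m_{k,k+i-1}$ for $k=1,\ldots,n-i+1$, so the first diagonal is the main diagonal. The key fact is that an entry $m_{k,k+i-1}$ on the $i$-th diagonal lies in exactly the $i$ submatrices $M_{[k]},M_{[k+1]},\ldots,M_{[k+i-1]}$. Indeed, $M_{[j]}$ consists of the entries $m_{r,c}$ with $r\le j\le c$.

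The plan is to argue by counting. Suppose the $i$-th diagonal has $s$ nonzero entries, at positions $k_1<k_2<\cdots<k_s$. Entry $k_\ell$ makes each of the $i$ consecutive submatrices indexed by $k_\ell,\ldots,k_\ell+i-1$ nonzero. So every $M_{[j]}$ with $j$ in the union of the intervals
\[
I_\ell=\{k_\ell,\ldots,k_\ell+i-1\}, \qquad \ell=1,\ldots,s,
\]
has rank at least $1$. Since the $k_\ell$ are distinct integers, this union has at least $(i-1)+s$ elements: the largest start point gives the extra $i-1$ points beyond the $s$ distinct starts. More precisely, the union contains $\{k_1,\ldots,k_s\}\cup\{k_s+1,\ldots,k_s+i-1\}$, and these two sets are disjoint. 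Therefore
\[
\delta=\mathrm{fr}(M)\ge s+i-1,
\]
which gives $s\le \delta-i+1$ immediately.

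This argument alone already suffices, and it does not require the hypothesis $\delta\le n$. I would remark that $\delta\le n$ only ensures that the index $i\le\delta$ gives a genuine diagonal (one with $n-i+1\ge1$ entries). I would also check the boundary case: when the diagonal is empty of nonzero entries the claim is trivial.

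An alternative, sharper route uses rank. Restricted to its $i$-th diagonal entries, $M_{[j]}$ contains at most $\min(j,\,n-j+1)$ of them, and these sit in distinct rows and columns (an anti-diagonal pattern). But nonzero entries in distinct rows and columns do not directly force a large rank, because other entries can cancel. So I would stick to the simple union-of-intervals count, which needs only the fact that each covered block is nonzero.

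There is no real obstacle here. The only care needed is correctly identifying which blocks $M_{[j]}$ contain a given entry, namely all $j$ with $r\le j\le c$, and bounding the size of the union of the intervals from below.
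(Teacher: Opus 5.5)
Your proof is correct and is essentially the paper's argument: each nonzero entry on the $i$-th diagonal makes $i$ consecutive blocks $M_{[j]}$ nonzero, and the union of these index intervals has at least $s+i-1$ elements, so $\mathrm{fr}(M)\ge s+i-1$. The paper argues by contradiction (assuming $\delta-i+2$ nonzero entries), whereas you argue directly and justify the union count explicitly via the disjoint sets $\{k_1,\ldots,k_s\}$ and $\{k_s+1,\ldots,k_s+i-1\}$.
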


\begin{proof}
Fix $i\in\{1,\ldots,\delta\}$. Suppose that the $i$-th diagonal contains at least $\delta-i+2$ nonzero entries. Then there exist distinct indices $j_1<j_2<\cdots<j_{\delta-i+2}$
such that $m_{j_t,j_t+i-1}\neq0$, for every $t=1,\ldots,\delta-i+2$.
The entry $m_{j_t,j_t+i-1}$ belongs to each of the consecutive submatrices $M_{[j_t]},M_{[j_t+1]},\ldots,M_{[j_t+i-1]}$.
Since the starting indices $j_t$ are strictly increasing, the union of these intervals of indices has size at least $i+(\delta-i+2)-1=\delta+1$.
Thus at least $\delta+1$ of the submatrices $M_{[h]}$ are nonzero. Each of them has rank at least one, and so $\mathrm{fr}(M)\ge\delta+1$,
which contradicts the assumption $\mathrm{fr}(M)\leq\delta$.
\end{proof}

\begin{corollary}\label{cor:diagonal-bound}
Let $1\le \delta\le n$. Then
\[
s_q(n,\delta)
\le
\prod_{i=1}^{\delta}
\left(
\sum_{j=0}^{\delta-i+1}
\binom{n-i+1}{j}(q-1)^j
\right).
\]
\end{corollary}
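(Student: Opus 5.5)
The plan is an injection argument from the sphere $S(\boldsymbol{0},\delta)$ into a product of per-diagonal "sparse patterns", built on Proposition~\ref{pro:nonzero} and Proposition~\ref{upperbound}. Index the diagonals so that the $i$-th diagonal consists of the entries $m_{j,j+i-1}$ with $1\le j\le n-i+1$. The $i$-th diagonal therefore has exactly $n-i+1$ positions.

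The key steps are as follows.

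\begin{enumerate}
\item First take the case $\delta<n$ and fix $M$ with $\mathrm{fr}(M)=\delta$. By Proposition~\ref{pro:nonzero}, every nonzero entry of $M$ lies on the first $\delta$ diagonals. Hence $M$ is completely determined by the tuple $(d_1,\dots,d_\delta)$, where $d_i\in\F_q^{\,n-i+1}$ is the vector of entries on the $i$-th diagonal. The map $M\mapsto(d_1,\dots,d_\delta)$ is therefore injective.
\item By Proposition~\ref{upperbound}, each $d_i$ has Hamming weight at most $\delta-i+1$.
\item The number of vectors in $\F_q^{\,n-i+1}$ of weight at most $\delta-i+1$ is $\sum_{j=0}^{\delta-i+1}\binom{n-i+1}{j}(q-1)^j$. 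Here binomial coefficients with $j>n-i+1$ vanish, so no truncation issue arises.
\item Since the image of the injection lies in the product of these Hamming balls, $s_q(n,\delta)$ is at most the product over $i=1,\dots,\delta$, which is the claimed bound.
\end{enumerate}

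The only delicate point is the boundary case $\delta=n$, where Proposition~\ref{pro:nonzero} does not apply. The product then runs over $i=1,\dots,n$, which covers all diagonals of $M$, so nothing is lost by dropping the vanishing statement. Proposition~\ref{upperbound} is stated for $\delta\le n$, so each diagonal still has weight at most $\delta-i+1=n-i+1$, which is in fact no constraint at all. In this case the bound reads $\prod_{i=1}^n q^{n-i+1}=q^{n(n+1)/2}$, which is trivially true. So the $\delta=n$ case needs only the remark that the first $n$ diagonals exhaust the matrix.

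I expect no real obstacle. The main care lies in checking the indexing convention, namely that the $i$-th diagonal has $n-i+1$ entries, so that it matches both Proposition~\ref{upperbound} and the binomial $\binom{n-i+1}{j}$.
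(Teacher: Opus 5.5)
Your proposal is correct and follows the paper's proof. Both use Proposition~\ref{upperbound} to restrict the $i$-th diagonal to a Hamming ball of radius $\delta-i+1$ in $\F_q^{\,n-i+1}$, use Proposition~\ref{pro:nonzero} to kill the diagonals above the $\delta$-th, and multiply the counts. Your explicit handling of $\delta=n$ (the first $n$ diagonals exhaust the matrix and the bound becomes $q^{n(n+1)/2}$) just makes precise a case the paper leaves implicit.
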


\begin{proof}
Let $M\in \mathrm{U}(n,\F_q)$ be such that $\mathrm{fr}(M)=\delta$. By Proposition~\ref{upperbound}, for every $i\in\{1,\ldots,\delta\}$, the $i$-th diagonal of $M$ contains at most $\delta-i+1$ nonzero entries. Moreover, by Proposition~\ref{pro:nonzero}, there are no nonzero entries strictly above the $\delta$-th diagonal when $\delta<n$.

The $i$-th diagonal contains exactly $n-i+1$ entries. If exactly $j$ of them are nonzero, where $0\le j\le \delta-i+1$,
then their positions can be chosen in
\[
\binom{n-i+1}{j}
\]
ways, and their values can be chosen in $(q-1)^j$ ways. Hence the number of possible choices for the $i$-th diagonal is at most
\[
\sum_{j=0}^{\delta-i+1}
\binom{n-i+1}{j}(q-1)^j.
\]
Multiplying over $i=1,\ldots,\delta$ gives the desired bound.
\end{proof}

We conclude this section with an asymptotic upper bound for spheres of fixed
radius. The estimate follows directly from Corollary~\ref{cor:diagonal-bound}
and will be used in Section~\ref{sec:perfect_fr_codes}.

\begin{proposition}\label{prop:asymptotic-spheres-upper-bound}
Let $n$ and $t$ be fixed positive integers with $t\le n$. Then, as $q\to\infty$,
$$
s_q(n,t)=O_{n,t}\left(q^{t(t+1)/2}\right),
$$
were the notation $O_{n,t}$ means that the implicit constant depends only on $n$ and $t$.
Consequently,
$$
b_q(n,t)=O_{n,t}\left(q^{t(t+1)/2}\right).
$$
\end{proposition}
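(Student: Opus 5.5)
We deduce the statement directly from Corollary~\ref{cor:diagonal-bound}, regarding its right-hand side as a polynomial in $q$ with coefficients depending only on $n$ and $t$. Since $t\le n$, the corollary applies with $\delta=t$ and gives
\[
s_q(n,t)\le \prod_{i=1}^{t}\left(\sum_{j=0}^{t-i+1}\binom{n-i+1}{j}(q-1)^j\right).
\]

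We bound each factor separately. For fixed $i$, the inner sum is a polynomial in $q$ of degree $t-i+1$; the top term $j=t-i+1$ is present because $t-i+1\le n-i+1$. Using $(q-1)^j\le q^j\le q^{t-i+1}$ for $q\ge 1$ and $0\le j\le t-i+1$, we get
\[
\sum_{j=0}^{t-i+1}\binom{n-i+1}{j}(q-1)^j\le C_i\, q^{t-i+1},
\qquad C_i:=\sum_{j=0}^{t-i+1}\binom{n-i+1}{j}\le 2^{n},
\]
where $C_i$ depends only on $n$ and $t$. Multiplying over $i=1,\ldots,t$, the exponents add up to $\sum_{i=1}^{t}(t-i+1)=t(t+1)/2$. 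Hence $s_q(n,t)\le \left(\prod_i C_i\right) q^{t(t+1)/2}\le 2^{nt}q^{t(t+1)/2}$, which is the claimed estimate.

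For the ball, write $b_q(n,t)=\sum_{r=0}^{t}s_q(n,r)$. Each $r\le t\le n$ satisfies the hypotheses, so $s_q(n,r)=O_{n,r}(q^{r(r+1)/2})$; for $r=0$ we simply use $s_q(n,0)=1$. Since $r(r+1)/2\le t(t+1)/2$ and $q\ge 2$, each summand is $O_{n,t}(q^{t(t+1)/2})$. The number of summands is $t+1$, which depends only on $t$, so $b_q(n,t)=O_{n,t}(q^{t(t+1)/2})$.

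There is no real obstacle here. The only points needing care are that the constants are uniform in $q$, which holds because they are binomial sums in $n,t$ alone, and the exponent bookkeeping, namely that the $i$-th diagonal contributes degree exactly $t-i+1$.
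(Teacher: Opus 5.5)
Your proof is correct and follows essentially the same route as the paper: apply Corollary~\ref{cor:diagonal-bound} with $\delta=t$, note that the $i$-th factor has degree at most $t-i+1$ in $q$ so the product has degree $t(t+1)/2$, then sum over $r\le t$ to get the ball estimate. Your explicit constant $2^{nt}$ and your separate handling of $r=0$ via $s_q(n,0)=1$ are minor refinements that the paper leaves implicit.
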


\begin{proof}
By Corollary~\ref{cor:diagonal-bound}, we have
$$
s_q(n,t)\le
\prod_{i=1}^t
\left(
\sum_{j=0}^{t-i+1}
\binom{n-i+1}{j}(q-1)^j
\right).
$$
For fixed $n$ and $t$, the $i$-th factor is a polynomial in $q$ of degree at
most $t-i+1$. Hence the whole product has degree at most
$$
\sum_{i=1}^t (t-i+1)=1+2+\cdots+t=\frac{t(t+1)}2.
$$
Therefore we get
$$
s_q(n,t)=O_{n,t}\left(q^{t(t+1)/2}\right).
$$
Since we have
$$
b_q(n,t)=\sum_{r=0}^t s_q(n,r),
$$
and the same estimate applied with $r$ in place of $t$ gives
$$
s_q(n,r)=O_{n,r}\left(q^{r(r+1)/2}\right)
=O_{n,t}\left(q^{t(t+1)/2}\right)
$$
for every $0\le r\le t$, we obtain
$$
b_q(n,t)=O_{n,t}\left(q^{t(t+1)/2}\right).
$$
\end{proof}

%%%%%%%%%%%%%%%%%%%%%%%%%%%%%%%%%%%%%%%%%%55

\section{Sphere-packing bound for flag-rank-metric codes}\label{sec:sphere_packing}

In this section, we prove a sphere-packing bound for the flag-rank metric. This bound naturally leads to the definition of perfect codes in this setting. As in the classical Hamming and rank-metric frameworks, the proof relies on the fact that balls of suitable radius centered at distinct codewords are pairwise disjoint.

\begin{theorem}[Sphere-packing bound]\label{thm:sphere-packing}
Let $\mC\subseteq \mathrm{U}(n,\F_q)$ be a flag-rank-metric code with minimum distance $\delta$, and let $t=\left\lfloor\frac{\delta-1}{2}\right\rfloor$.
Then
\[
|\mC|\, b_q(n,t)\le q^{\frac{n(n+1)}{2}}.
\]
\end{theorem}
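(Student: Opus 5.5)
The plan is the standard disjoint-balls argument; the only ingredient to check is that $\mathrm{d}_{\mathrm{fr}}$ is a genuine metric, in particular that it satisfies the triangle inequality. For this I will use that $\mathrm{fr}$ is a norm-like weight. Each map $M\mapsto M_{[i]}$ is $\F_q$-linear, and rank is subadditive: $\rk(X+Y)\le\rk(X)+\rk(Y)$. Hence
\[
\mathrm{fr}(A+B)=\sum_{i=1}^n\rk(A_{[i]}+B_{[i]})\le \mathrm{fr}(A)+\mathrm{fr}(B).
\]
Applying this with $A=N-M$ and $B=M'-N$ gives
\[
\mathrm{d}_{\mathrm{fr}}(M,M')\le \mathrm{d}_{\mathrm{fr}}(M,N)+\mathrm{d}_{\mathrm{fr}}(N,M').
\]
Symmetry also holds, since $\rk(-X)=\rk(X)$.

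Next I show that the balls $B(M,t)$, for $M\in\mC$, are pairwise disjoint. Suppose $M\neq M'$ are in $\mC$ and some $N$ lies in $B(M,t)\cap B(M',t)$. Then
\[
\delta\le \mathrm{d}_{\mathrm{fr}}(M,M')\le 2t\le \delta-1,
\]
where the last inequality holds because $t=\lfloor(\delta-1)/2\rfloor$. This is a contradiction. The code need not be linear for this step; we only use that distinct codewords are at distance at least $\delta$.

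Finally, as noted at the start of Section~\ref{sec:balls}, every ball $B(M,t)$ has exactly $b_q(n,t)$ elements, because translation by $M$ is a bijection that preserves $\mathrm{fr}$ of differences. The $|\mC|$ disjoint balls all lie inside $\mathrm{U}(n,\F_q)$, which has $q^{n(n+1)/2}$ elements. Summing their sizes gives $|\mC|\,b_q(n,t)\le q^{\frac{n(n+1)}{2}}$. The only point needing care is the triangle inequality; since it follows immediately from rank subadditivity, I do not expect any real obstacle.
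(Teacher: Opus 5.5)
Your proposal is correct and follows essentially the same argument as the paper: the triangle inequality gives pairwise disjoint balls of radius $t$ around the codewords, and counting these balls inside $\mathrm{U}(n,\F_q)$ yields the bound. The only difference is that you explicitly justify the triangle inequality for $\mathrm{d}_{\mathrm{fr}}$ via linearity of $M\mapsto M_{[i]}$ and subadditivity of rank, which the paper uses without proof.
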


\begin{proof}
Let $\mC=\{M_1,\ldots,M_s\}$.
We first show that the balls $B(M_i,t)$ are pairwise disjoint. Suppose, by contradiction, that there exist $i\neq j$ and
$N\in B(M_i,t)\cap B(M_j,t)$.
Then, by the triangle inequality,
\[
\mathrm{d}_{\mathrm{fr}}(M_i,M_j)
\le
\mathrm{d}_{\mathrm{fr}}(M_i,N)+\mathrm{d}_{\mathrm{fr}}(N,M_j)
\le
2t.
\]
Since $2t\le \delta-1$, this contradicts the fact that the minimum distance of $\mC$ is $\delta$. Hence the balls $B(M_i,t)$ are pairwise disjoint.
Therefore, we also have
\[
\left|\bigcup_{i=1}^{s} B(M_i,t)\right|
=
\sum_{i=1}^{s}|B(M_i,t)|.
\]
Since the size of a ball does not depend on its center, we have $|B(M_i,t)|=b_q(n,t)$ 
for every $i$. Hence
\[
\left|\bigcup_{i=1}^{s} B(M_i,t)\right|
=
s\,b_q(n,t)
=
|\mC|\,b_q(n,t).
\]
On the other hand, the union is contained in $\mathrm{U}(n,\F_q)$, and therefore
\[
|\mC|\,b_q(n,t)
\le
|\mathrm{U}(n,\F_q)|
=
q^{\frac{n(n+1)}{2}}.
\]
This proves the claim.
\end{proof}

\begin{remark}
The previous result holds for arbitrary, not necessarily linear, subsets $\mC\subseteq \mathrm{U}(n,\F_q)$. However, unless otherwise stated, we restrict our attention to linear flag-rank-metric codes, that is, $\F_q$-subspaces of $\mathrm{U}(n,\F_q)$.
\end{remark}

As in the classical theory of error-correcting codes, we define perfect codes as those attaining equality in the sphere-packing bound.

\begin{definition}
Let $\mC\subseteq \mathrm{U}(n,\F_q)$ be a flag-rank-metric code with minimum distance $\delta$, and let $t=\left\lfloor\frac{\delta-1}{2}\right\rfloor$.
We say that $\mC$ is \textbf{perfect} if
\[
|\mC|\,b_q(n,t)=q^{\frac{n(n+1)}{2}}.
\]
\end{definition}

\begin{example}
A trivial example of a perfect flag-rank-metric code is obtained by taking $\mC=\mathrm{U}(n,\F_q)$.
Indeed, in this case the minimum flag-rank distance is $\delta=1$, so that $t=\left\lfloor\frac{\delta-1}{2}\right\rfloor=0$ and $b_q(n,0)=1$. Hence
\[
|\mC|\,b_q(n,0)
=
|\mathrm{U}(n,\F_q)|
=
q^{\frac{n(n+1)}{2}}.
\]
\end{example}

\begin{proposition}\label{pro:perfect-odd-distance}
Let $\mathcal{C}\subseteq \mathrm{U}(n,\mathbb{F}_q)$ be a linear
flag-rank-metric code with minimum distance $\delta$. If $\mathcal{C}$ is
perfect, then $\delta$ is odd.
\end{proposition}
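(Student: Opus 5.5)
Suppose $\delta$ is even, say $\delta=2t+2$ with $t=\lfloor(\delta-1)/2\rfloor$; we derive a contradiction. Since $\mC$ is perfect, the balls $B(M,t)$, $M\in\mC$, are pairwise disjoint (as in the proof of Theorem~\ref{thm:sphere-packing}). By the equality $|\mC|\,b_q(n,t)=q^{n(n+1)/2}$ they cover $\mathrm{U}(n,\F_q)$. So it suffices to exhibit a matrix $N$ at flag-rank distance at least $t+1$ from every codeword.

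First handle $\delta=2$, i.e.\ $t=0$. Then $b_q(n,0)=1$, and perfection forces $\mC=\mathrm{U}(n,\F_q)$. But this code has minimum distance $1\neq 2$, a contradiction.

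For $\delta\ge 4$, pick a codeword $C\in\mC$ with $\mathrm{fr}(C)=\delta=2t+2$; it exists because $\mC$ is linear. The key step is to split $C=A+B$ in $\mathrm{U}(n,\F_q)$ with $\mathrm{fr}(A)=t+1$ and $\mathrm{fr}(B)\le t+1$. Take $N=A$. Then $\mathrm{d}_{\mathrm{fr}}(A,\boldsymbol{0})=t+1$ and $\mathrm{d}_{\mathrm{fr}}(A,C)=\mathrm{fr}(B)\le t+1$. Since $A$ lies in some ball $B(M,t)$, the triangle inequality gives $\mathrm{d}_{\mathrm{fr}}(M,\boldsymbol{0})\le 2t+1$ and $\mathrm{d}_{\mathrm{fr}}(M,C)\le 2t+1$. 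Both distances are $<\delta$, so $M=\boldsymbol{0}$ and $M=C$ simultaneously, which is impossible since $C\neq\boldsymbol{0}$.

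The main obstacle is the splitting lemma. I would argue it by a path argument: connect $\boldsymbol{0}$ to $C$ through matrices $\boldsymbol{0}=N_0,N_1,\dots,N_r=C$ in which consecutive terms differ in a single entry. Changing one entry changes each $\rk(M_{[i]})$ by at most one, so $\mathrm{fr}$ can jump by more than one along such a path. This is why I would use a different sequence. Write $C=\sum_k R_k$, where each $R_k$ is obtained from $C$ by keeping only a rank-one piece. Concretely, I would peel off entries one at a time and track $\mathrm{fr}$ via subadditivity, $\mathrm{fr}(X+Y)\le \mathrm{fr}(X)+\mathrm{fr}(Y)$, which holds because rank is subadditive on each block $M_{[i]}$.

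My preferred route is to define $f(X)=\mathrm{fr}(X)$ along the sequence of partial sums $X_k$ obtained by adding the entries of $C$ one at a time. Then $f(X_0)=0$ and $f(X_r)=2t+2$, with $\mathrm{fr}(C-X_k)$ taken complementarily. Choose the first $k$ with $f(X_k)\ge t+1$. Adding one entry changes $\mathrm{fr}$ by at most the number of blocks containing that entry. To control the jump I would instead add entries in an order where each new entry lies in few new blocks. Alternatively, I would avoid exact equality: it suffices that $A$ has $\mathrm{fr}(A)\ge t+1$ and $\mathrm{fr}(C-A)\ge t+1$. Indeed, $A$ must then avoid both $B(\boldsymbol{0},t)$ and $B(C,t)$, so its covering center $M$ satisfies $\mathrm{fr}(M)\le\mathrm{fr}(A)+t$ and $\mathrm{fr}(C-M)\le \mathrm{fr}(C-A)+t$.

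If the combinatorial splitting proves delicate, the fallback is the direct count. Show that $|\mC|\,b_q(n,t)=q^{n(n+1)/2}$ together with the Singleton-like bound of Theorem~\ref{th:Singleton_codimension} for $\delta=2t+2$ is incompatible. This works for small $t$ using Corollary~\ref{cor:ballsize}.
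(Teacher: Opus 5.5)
Your setup (disjoint balls covering the space, and the separate treatment of $\delta=2$) is fine. The key splitting lemma, however, is false in general, so the argument has a genuine gap. Take $n\ge 4$ and let $C$ be the matrix whose only nonzero entry is in position $(1,4)$. This entry lies in $M_{[1]},\dots,M_{[4]}$, so $\mathrm{fr}(C)=4$. By Proposition~\ref{pro:nonzero}, every matrix of weight at most $2$ is supported on the first two diagonals, and so is any sum of two such matrices. Hence $C\neq A+B$ with $\mathrm{fr}(A)=2$ and $\mathrm{fr}(B)\le 2$.

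The same obstruction recurs whenever $2t+2\le n$. The single-entry matrix at $(1,2t+2)$ has weight $2t+2$, while matrices of weight at most $t+1$ live on the first $t+1$ diagonals. Nothing lets you pick a minimum-weight codeword avoiding this shape.

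Your weakened variant does not rescue the argument. From $\mathrm{fr}(A)\ge t+1$ and $\mathrm{fr}(C-A)\ge t+1$ you only learn that the covering codeword of $A$ is some third codeword $M$. The bounds $\mathrm{fr}(M)\le\mathrm{fr}(A)+t$ and $\mathrm{fr}(C-M)\le \mathrm{fr}(C-A)+t$ contradict nothing without an upper bound on $\mathrm{fr}(A)$. The counting fallback already fails at $\delta=4$. There the Singleton-like bound gives $\alpha\ge 4$, and $1+n(q-1)=q^\alpha$ has the integer solution $n=(q^\alpha-1)/(q-1)$.

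The missing observation is that $B$ and $C$ are superfluous: your own argument is already finished once you obtain $M=\boldsymbol{0}$. Take any $A$ with $\mathrm{fr}(A)=t+1$, and let $M\in\mC$ satisfy $A\in B(M,t)$. Then $\mathrm{fr}(M)\le \mathrm{d}_{\mathrm{fr}}(M,A)+\mathrm{fr}(A)\le 2t+1<\delta$. Since $\boldsymbol{0}\in\mC$, this forces $M=\boldsymbol{0}$, hence $\mathrm{fr}(A)\le t$, a contradiction. This is exactly the paper's proof, which runs the same argument around an arbitrary codeword $C_0$ via $X=C_0+M$.

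The only input is a matrix of weight exactly $t+1$, which the paper takes for granted. For $t+1\le n$, a diagonal matrix with $t+1$ nonzero entries works. In general, if a matrix has at most one nonzero entry in each row and column, its flag-rank weight is $\sum(c-r+1)$ over its nonzero positions $(r,c)$. Placing nonzero entries at $(j,n+1-j)$ for $j<k$, plus one at $(k,c)$ with $k\le c\le n+1-k$, realizes every weight from $1$ up to the maximum. In particular it realizes $t+1\le\delta/2$.
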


\begin{proof}
Suppose, by contradiction, that $\delta$ is even. Then $\delta=2e$ for some
integer $e\geq 1$. Hence the radius $t=\left\lfloor \frac{\delta-1}{2}\right\rfloor
 =\left\lfloor \frac{2e-1}{2}\right\rfloor
 =e-1$.
Let $C_0\in\mathcal{C}$ and choose $M\in \mathrm{U}(n,\mathbb{F}_q)$ such that $\mathrm{fr}(M)=e$.
Set $X=C_0+M$. Then, $
\mathrm{d}_{\mathrm{fr}}(X,C_0)=\operatorname{fr}(M)=e$,
and therefore $X\notin B_{e-1}(C_0)$.
Since $\mathcal{C}$ is perfect, the balls of radius $e-1$ centered at the
codewords of $\mathcal{C}$ cover the whole space. Hence there exists
$C_1\in\mathcal{C}$, with $C_1\neq C_0$, such that $X\in B_{e-1}(C_1)$.
Thus, $\mathrm{d}_{\mathrm{fr}}(X,C_1)\leq e-1.$
By the triangle inequality, we obtain
\[
\mathrm{d}_{\mathrm{fr}}(C_0,C_1)
\leq \mathrm{d}_{\mathrm{fr}}(C_0,X)+\mathrm{d}_{\mathrm{fr}}(X,C_1)
\leq e+(e-1)=2e-1.
\]
On the other hand, since $C_0$ and $C_1$ are distinct codewords and
$\mathcal{C}$ has minimum distance $\delta=2e$, we must have $\mathrm{d}_{\mathrm{fr}}(C_0,C_1)\geq \delta=2e$.
This is a contradiction. Therefore, $\delta$ cannot be even, and so $\delta$ is odd.
\end{proof}

\section{On perfect flag-rank-metric codes}\label{sec:perfect_fr_codes}

In this section, we study perfect flag-rank-metric codes. We first prove a necessary divisibility condition for the size of the ball appearing in the sphere-packing bound. We then use this condition to prove the non-existence of non-trivial perfect flag-rank-metric codes in $\mathrm{U}(2,\F_q)$ and $\mathrm{U}(3,\F_q)$.

\begin{proposition}\label{prop:condmodq}
Let $\mC\subsetneq \mathrm{U}(n,\F_q)$ be a linear flag-rank-metric code with minimum distance $\delta$, and let $t=\left\lfloor\frac{\delta-1}{2}\right\rfloor$.
If $\mC$ is perfect, then $b_q(n,t)\equiv0\pmod q$.
\end{proposition}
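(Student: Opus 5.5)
Since $\mC$ is linear, $|\mC|=q^k$ with $k=\dim_{\F_q}(\mC)$. The plan is to rewrite the perfectness equation as $b_q(n,t)=q^{\frac{n(n+1)}{2}-k}$ and show that the exponent is at least $1$.

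First, linearity gives $|\mC|=q^k$. Substituting into the definition of a perfect code, $q^k\, b_q(n,t)=q^{\frac{n(n+1)}{2}}$, so
\[
b_q(n,t)=q^{\frac{n(n+1)}{2}-k}=q^{\mathrm{codim}_{\F_q}(\mC)}.
\]
Second, the hypothesis $\mC\subsetneq \mathrm{U}(n,\F_q)$ means $\mC$ is a proper subspace. Hence $k<\frac{n(n+1)}{2}$ and $\mathrm{codim}_{\F_q}(\mC)\ge 1$. Therefore $b_q(n,t)$ is a positive power of $q$, and in particular $b_q(n,t)\equiv 0\pmod q$.

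No real obstacle is expected. The only point to watch is the use of both hypotheses: linearity, so that $|\mC|$ is a power of $q$, and properness, so that the exponent is positive. An arbitrary perfect code would only give $|\mC|\cdot b_q(n,t)=q^{n(n+1)/2}$, which still forces $b_q(n,t)$ to be a power of $q$ by unique factorization in $\mathbb Z$. If one wanted the statement without linearity, one would use that, together with $b_q(n,t)>1$ whenever $\mC\neq\mathrm{U}(n,\F_q)$.
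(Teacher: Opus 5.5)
Your proof is correct and is exactly the paper's argument: linearity turns perfectness into $b_q(n,t)=q^{\mathrm{codim}_{\F_q}(\mC)}$, and properness of $\mC$ makes the codimension positive. One caveat on your closing aside, which the statement does not need: for a non-linear code, unique factorization only shows that $b_q(n,t)$ is a power of the characteristic $p$, not of $q=p^m$, so without linearity you would in general only get $b_q(n,t)\equiv 0 \pmod p$.
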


\begin{proof}
Since $\mC$ is linear and perfect, equality holds in the sphere-packing bound. Hence
\begin{equation}\label{eq:condcor}
q^{\dim_{\F_q}(\mC)}\, b_q(n,t)
=
q^{\frac{n(n+1)}2}.
\end{equation}
Therefore
\[
b_q(n,t)
=
q^{\frac{n(n+1)}2-\dim_{\F_q}(\mC)}
=
q^{\mathrm{codim}_{\F_q}(\mC)}.
\]
Since $\mC\subsetneq \mathrm{U}(n,\F_q)$, its codimension is positive. Thus $b_q(n,t)$ is divisible by $q$.
\end{proof}

\subsection{Non-existence of perfect codes in $\mathrm{U}(2,\F_q)$ and $\mathrm{U}(3,\F_q)$}

Combining Proposition~\ref{prop:condmodq} with the explicit formula for $b_q(n,1)$ and with the Singleton-like bound, we obtain the following non-existence result.

\begin{corollary}\label{cor:nonexistence-n2-n3}
There are no non-trivial perfect flag-rank-metric codes in $\mathrm{U}(n,\F_q)$ for $n\in\{2,3\}$.
\end{corollary}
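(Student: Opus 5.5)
The plan is to reduce to a single case using the parity result, the maximum possible flag-rank weight in small dimension, and then Proposition~\ref{prop:condmodq} together with the Singleton-like bound. Here ``non-trivial'' means $\{\boldsymbol{0}\}\ne\mC\subsetneq \mathrm{U}(n,\F_q)$, and $\mC$ is linear as elsewhere in this section.

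\textbf{Step 1: reduce to $\delta\ge 3$.} Let $\mC$ be a non-trivial perfect linear code with minimum distance $\delta$. By Proposition~\ref{pro:perfect-odd-distance}, $\delta$ is odd. If $\delta=1$, then $t=0$ and $b_q(n,0)=1$, so perfectness forces $|\mC|=q^{n(n+1)/2}$, that is, $\mC=\mathrm{U}(n,\F_q)$. This is the trivial case, so we may assume $\delta\ge 3$.

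\textbf{Step 2: bound the maximum weight.} Since $\mC\neq\{\boldsymbol{0}\}$, $\delta$ is at most the maximum flag-rank weight in $\mathrm{U}(n,\F_q)$. Because $\rk(M_{[i]})\le\min\{i,n-i+1\}$, we get
\[
\mathrm{fr}(M)\le\sum_{i=1}^n\min\{i,n-i+1\}.
\]
This sum equals $1+1=2$ for $n=2$ and $1+2+1=4$ for $n=3$. Hence $n=2$ admits no $\delta\ge 3$ and is settled. For $n=3$, odd $\delta\le 4$ with $\delta\ge3$ forces $\delta=3$, so $t=1$.

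\textbf{Step 3: the case $n=3$, $\delta=3$.} By Corollary~\ref{cor:ballsize}, $b_q(3,1)=1+3(q-1)=3q-2$. Proposition~\ref{prop:condmodq} requires $q\mid 3q-2$, hence $q\mid 2$, so $q=2$. Then $b_2(3,1)=4=2^2$, and the proof of Proposition~\ref{prop:condmodq} gives $\mathrm{codim}_{\F_q}(\mC)=2$. The Singleton-like bound (Theorem~\ref{th:Singleton_codimension}) with $\delta=3$, however, gives
\[
\mathrm{codim}_{\F_q}(\mC)\ge 2+\lfloor\sqrt2\rfloor\left\lfloor\frac{\sqrt9-1}{2}\right\rfloor=3.
\]
This is a contradiction, which completes the argument.

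The only delicate point is the surviving case $q=2$ in Step 3, where the divisibility condition alone does not exclude a perfect code. It is the Singleton-like bound that closes it. Everything else is routine bookkeeping.
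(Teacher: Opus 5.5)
Your proof is correct and follows essentially the same route as the paper: bound the maximal flag-rank weight to get $t\le 1$, dismiss $t=0$ as trivial, use $b_q(n,1)\equiv 0 \pmod q$ to isolate $(n,q)=(3,2)$, and rule out that case with the Singleton-like bound. The only cosmetic differences are that you invoke Proposition~\ref{pro:perfect-odd-distance} to discard $\delta=4$, where the paper lets the Singleton-like bound cover $\delta\in\{3,4\}$ together, and that you settle $n=2$ directly from the weight bound rather than through the divisibility condition.
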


\begin{proof}
Let $\mC\subsetneq \mathrm{U}(n,\F_q)$ be a linear flag-rank-metric code with minimum distance $\delta$, and set $t=\left\lfloor\frac{\delta-1}{2}\right\rfloor$.
For $n=2$, the maximum possible flag-rank distance is $2$, while for $n=3$ it is $4$. Hence $t\le 1$. 
If $t=0$, then equality in the sphere-packing bound gives $|\mC|=|\mathrm{U}(n,\F_q)|$, which contradicts the assumption $\mC\subsetneq \mathrm{U}(n,\F_q)$. Therefore, if $\mC$ is perfect and non-trivial, we must have $t=1$.
By Proposition~\ref{prop:condmodq}, since $\mC$ is perfect then $b_q(n,1)\equiv0\pmod q$.
Moreover, from Corollary~\ref{cor:ballsize} $b_q(n,1)=1+n(q-1)$, we get $1+n(q-1)\equiv 1-n \pmod q$.
Thus $q$ must divide $n-1$. For $n=2$, this is impossible. For $n=3$, it is possible only when $q=2$.

It remains to consider the exceptional case $(n,q)=(3,2)$. In this case,
\[
b_q(3,1)=1+3(2-1)=4.
\]
If $\mC$ were perfect, then \eqref{eq:condcor} would imply
\[
|\mC|
=
\frac{|\mathrm{U}(3,\F_2)|}{b_2(3,1)}
=
\frac{2^6}{4}
=
2^4.
\]
Thus $\dim_{\F_2}(\mC)=4$. Since $t=1$, we have $\delta\in\{3,4\}$. The Singleton-like bound gives
$\mathrm{codim}_{\F_2}(\mC)\ge3$,
and hence $\dim_{\F_2}(\mC)\le 6-3=3$, yielding a contradiction.
\end{proof}

The previous corollary shows that the first ambient spaces are too small to contain non-trivial perfect flag-rank-metric codes. In the next results, we derive necessary numerical conditions for perfect codes with larger minimum distance.

\begin{proposition}\label{prop:dim-lower-perfect}
Let $\mC\subseteq \mathrm{U}(n,\F_q)$ be a perfect linear flag-rank-metric code with minimum distance $\delta\le n$, and let
$t=\left\lfloor\frac{\delta-1}{2}\right\rfloor$.
Then
\[
\dim_{\F_q}(\mC)
\ge
\frac{n(n+1)}2-\frac{t(2n-t+1)}2.
\]
In particular,
\[
\dim_{\F_q}(\mC)
\ge
\frac{n(n+1)}2-\frac{\delta(2n-\delta+1)}2.
\]
\end{proposition}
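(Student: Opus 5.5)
The plan is to combine the perfectness identity from the proof of Proposition~\ref{prop:condmodq} with the counting bound of Proposition~\ref{upper}. Since $\mC$ is linear and perfect, equation~\eqref{eq:condcor} gives
\[
q^{\mathrm{codim}_{\F_q}(\mC)}=b_q(n,t)=\sum_{r=0}^{t}s_q(n,r).
\]
So it suffices to bound $b_q(n,t)$ from above by $q^{t(2n-t+1)/2}$. Taking logarithms base $q$ then gives $\mathrm{codim}_{\F_q}(\mC)\le \frac{t(2n-t+1)}{2}$, which is the first inequality after rearranging.

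For the upper bound on the ball, I will not sum the sphere bounds of Proposition~\ref{upper}, since that sum exceeds $q^{N_t}$. Instead I will argue directly, as in the proof of Proposition~\ref{upper}. Every matrix $M$ with $\mathrm{fr}(M)\le t$ has all nonzero entries on the first $t$ diagonals.
\begin{itemize}
\item If $\mathrm{fr}(M)=r$ with $1\le r\le t$, then $r\le t<\delta\le n$, so Proposition~\ref{pro:nonzero} applies. It puts the nonzero entries on the first $r\le t$ diagonals.
\item The zero matrix trivially satisfies this.
\end{itemize}
Hence $B(\boldsymbol{0},t)$ injects into the set of upper triangular matrices supported on the first $t$ diagonals. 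That set has $q^{N_t}$ elements, where $N_t=\frac{t(2n-t+1)}{2}$, so $b_q(n,t)\le q^{N_t}$. The case $t=0$ is trivial, since then $b_q(n,0)=1$ and the claimed bound is $\dim\mC\ge n(n+1)/2-0$. This is consistent, because perfectness with $t=0$ forces $\mC$ to be the whole space.

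For the ``in particular'' part, I only need $N_t\le N_\delta$. The function $x\mapsto x(2n-x+1)/2$ is increasing for $x\le n$, and $t\le\delta\le n$. Therefore $\frac{n(n+1)}2-N_t\ge \frac{n(n+1)}2-N_\delta$, and the second inequality follows.

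The only step needing care is the ball bound. It requires applying Proposition~\ref{pro:nonzero} to every radius $r\le t$, rather than summing the sphere bounds, and this is where the hypothesis $\delta\le n$ is used, to guarantee $r<n$. The rest is direct.
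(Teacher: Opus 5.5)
Your proof is correct and follows the paper's argument. Perfectness gives $b_q(n,t)=q^{\mathrm{codim}_{\F_q}(\mC)}$, and every matrix of weight at most $t<\delta\le n$ is supported on the first $t$ diagonals, so $b_q(n,t)\le q^{t(2n-t+1)/2}$. You are also slightly more careful than the paper: it cites the sphere bound of Proposition~\ref{upper} for what is really a ball bound, and it leaves implicit the ``in particular'' clause, which you justify by monotonicity of $x\mapsto x(2n-x+1)/2$ for $x\le n$.
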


\begin{proof}
Since $\mC$ is perfect, we have
\[
|\mC|
=
\frac{|\mathrm{U}(n,\F_q)|}{b_q(n,t)}
=
\frac{q^{\frac{n(n+1)}2}}{b_q(n,t)}.
\]
Since $t<\delta\le n$, every matrix of flag-rank weight at most $t$ has all its nonzero entries on the first~$t$ diagonals. Thus by Proposition \ref{upper}, we have
\[
b_q(n,t)
\le
q^{\frac{t(2n-t+1)}2}.
\]
It follows that
\[
|\mC|
\ge
q^{\frac{n(n+1)}2-\frac{t(2n-t+1)}2}.
\]
Taking logarithms in base $q$ gives
\[
\dim_{\F_q}(\mC)
\ge
\frac{n(n+1)}2-\frac{t(2n-t+1)}2.
\]
\end{proof}

\subsection{Perfect codes with minimum distance $3$}

\begin{theorem}\label{Thm:perf}
Let $\mC\subseteq \mathrm{U}(n,\F_q)$ be a linear flag-rank-metric code with minimum distance $\delta=3$, and let $\alpha$ be its codimension. Then $\mC$ is perfect if and only if
\[
n=\frac{q^\alpha-1}{q-1}.
\]
Moreover, necessarily $\alpha\ge \delta$.
\end{theorem}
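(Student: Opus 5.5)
With $\delta=3$ the packing radius is $t=\lfloor (3-1)/2\rfloor=1$. By Corollary~\ref{cor:ballsize}, $b_q(n,1)=1+n(q-1)$. Since $\mC$ is linear with codimension $\alpha$, we have $|\mC|=q^{\frac{n(n+1)}2-\alpha}$. The definition of perfectness, $|\mC|\,b_q(n,1)=q^{\frac{n(n+1)}2}$, therefore becomes, after cancelling the common power of $q$,
\[
1+n(q-1)=q^{\alpha}.
\]
This equation is equivalent to $n(q-1)=q^\alpha-1$, that is, $n=\frac{q^\alpha-1}{q-1}$. Every step here is reversible: if $n=\frac{q^\alpha-1}{q-1}$, then $|\mC|\,b_q(n,1)=q^{\frac{n(n+1)}2-\alpha}\,q^\alpha=q^{\frac{n(n+1)}2}$, so $\mC$ is perfect. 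This settles the equivalence. The only care needed is that Corollary~\ref{cor:ballsize} is stated for $n\ge 3$; the formula $b_q(n,1)=1+n(q-1)$ holds for every $n$, since $s_q(n,0)=1$ and $s_q(n,1)=n(q-1)$ were proved for all $n$. In any case $\delta=3$ already forces $n\ge 2$.

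For the claim $\alpha\ge\delta=3$, I would apply the Singleton-like bound of Theorem~\ref{th:Singleton_codimension} with $\delta=3$:
\[
\alpha\ \ge\ 2+\lfloor\sqrt2\rfloor\left\lfloor\frac{\sqrt9-1}{2}\right\rfloor=2+1\cdot 1=3 .
\]
This gives the bound directly and needs no appeal to perfectness. As a cross-check, perfectness with $\alpha\le 2$ would give $n\le q+1$, while a minimum distance of $3$ requires $n\ge 3$. This is consistent with the result, but the Singleton bound is the clean argument.

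I expect no real obstacle. The one point to state explicitly is that the codimension is measured inside the ambient space of dimension $\frac{n(n+1)}2$, so that $q^{\mathrm{codim}}=q^{\frac{n(n+1)}2}/|\mC|$, which is what makes the cancellation legitimate. Proposition~\ref{prop:condmodq} is then recovered as a consistency check: $b_q(n,1)=q^\alpha$ is divisible by $q$.
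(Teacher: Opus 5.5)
Your proof is correct and follows the paper's argument: with $t=1$, perfectness is equivalent to $b_q(n,1)=1+n(q-1)=q^{\alpha}$, which gives $n=\frac{q^\alpha-1}{q-1}$, and $\alpha\ge 3$ follows from the Singleton-like bound (your evaluation $2+1\cdot 1=3$ is right). Your remark that $b_q(n,1)=1+n(q-1)$ holds for every $n$ is a bit of extra care that the paper does not spell out.
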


\begin{proof}
Since $\delta=3$, we have $t=\left\lfloor\frac{\delta-1}{2}\right\rfloor=1$.
Thus $\mC$ is perfect if and only if
\[
b_q(n,1)=\frac{|\mathrm{U}(n,\F_q)|}{|\mC|}.
\]
Since $b_q(n,1)=1+n(q-1)$
and $\frac{|\mathrm{U}(n,\F_q)|}{|\mC|}
=
q^{\mathrm{codim}_{\F_q}(\mC)}
=
q^\alpha$,
this is equivalent to $1+n(q-1)=q^\alpha$.
Solving the identity  for $n$, we obtain $n=\frac{q^\alpha-1}{q-1}$.
Finally, the inequality $\alpha\ge\delta$ follows from the Singleton-like bound in Theorem~\ref{th:Singleton_codimension}.
\end{proof}

\begin{corollary}\label{cor:MFRD-perfect-34}
Let $\mC\subseteq \mathrm{U}(n,\F_q)$ be an MFRD code with minimum distance $\delta=3$. If
\[
n=\frac{q^3-1}{q-1},
\]
then $\mC$ is perfect.
\end{corollary}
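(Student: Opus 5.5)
The plan is to reduce the statement to Theorem~\ref{Thm:perf} by computing the codimension of an MFRD code with $\delta=3$ from the Singleton-like bound.

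First, evaluate the right-hand side of \eqref{eq:sing-bound} at $\delta=3$. We have $\delta-1=2$ and $\lfloor\sqrt{2}\rfloor=1$. Moreover $\sqrt{4\cdot 2+1}=3$, so the second floor equals $\lfloor (3-1)/2\rfloor=1$. The bound therefore reads $\mathrm{codim}_{\F_q}(\mC)\ge 2+1\cdot 1=3$. Since $\mC$ is MFRD, equality holds, and so the codimension is $\alpha=3$.

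Next, apply Theorem~\ref{Thm:perf}. The code $\mC$ is linear with minimum distance $3$ and codimension $\alpha=3$. By the theorem, $\mC$ is perfect if and only if $n=\frac{q^3-1}{q-1}=q^2+q+1$, and this is exactly the hypothesis. Equivalently, $b_q(n,1)=1+n(q-1)=q^3=q^{\alpha}$, which matches $|\mathrm{U}(n,\F_q)|/|\mC|$.

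There is no real obstacle. The only step needing care is the floor computation in the Singleton-like bound, together with the observation that the statement is conditional: it asserts perfection when such an MFRD code exists, and does not assert that one exists. Existence is not needed for the argument.
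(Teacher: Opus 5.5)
Your proof is correct and follows the same approach as the paper: the Singleton-like bound at $\delta=3$ forces codimension $\alpha=3$ for an MFRD code, and Theorem~\ref{Thm:perf} then gives perfection exactly when $n=\frac{q^3-1}{q-1}$. Your explicit floor computation and the check $b_q(n,1)=1+n(q-1)=q^3$ are accurate.
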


\begin{proof}
Let $\alpha$ be the codimension of $\mC$. Since $\mC$ is MFRD, it attains the
Singleton-like bound. For $\delta=3$ the bound gives $\alpha=3$. Hence, $\alpha=\delta$.
Therefore, if
\[
n=\frac{q^3-1}{q-1},
\]
then
\[
n=\frac{q^\alpha-1}{q-1}.
\]
The claim follows from Theorem~\ref{Thm:perf}.
\end{proof}

\begin{corollary}\label{cor:perf-34-congruence}
Let $\mC\subseteq \mathrm{U}(n,\F_q)$ be a perfect linear flag-rank-metric code with minimum
distance $\delta=3$. Then $n\equiv 1 \pmod q$.
In particular, if $n\not\equiv 1\pmod q$, then no perfect linear flag-rank-metric code with
minimum distance $\delta=3$ exists in $\mathrm{U}(n,\F_q)$.
\end{corollary}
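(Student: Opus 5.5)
The plan is to read the congruence straight off the characterization in Theorem~\ref{Thm:perf}. Let $\alpha$ be the codimension of $\mC$. Since $\mC$ is perfect with $\delta=3$, Theorem~\ref{Thm:perf} gives
\[
n=\frac{q^\alpha-1}{q-1}=1+q+q^2+\cdots+q^{\alpha-1},
\]
and moreover $\alpha\ge 3$.

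I will then reduce the geometric sum modulo $q$. Because $\alpha\ge 1$, every term $q^i$ with $1\le i\le \alpha-1$ vanishes modulo $q$, leaving only the constant term. Hence $n\equiv 1\pmod q$.

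A second route, independent of the geometric-sum form, also works. The perfection identity $1+n(q-1)=q^\alpha$ together with $\alpha\ge1$ gives $1-n\equiv 0\pmod q$. This is the same computation as in Proposition~\ref{prop:condmodq} and Corollary~\ref{cor:nonexistence-n2-n3}.

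The ``in particular'' statement is the contrapositive of the congruence. There is no real obstacle here. The only point to check is that $\alpha\ge1$, so that the reduction is legitimate, and this is guaranteed by the bound $\alpha\ge\delta=3$ in Theorem~\ref{Thm:perf}.
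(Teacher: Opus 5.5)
Your proposal is correct and matches the paper's proof: both invoke Theorem~\ref{Thm:perf} to write $n=1+q+\cdots+q^{\alpha-1}$ and reduce modulo $q$. Your explicit check that $\alpha\ge 1$ (via $\alpha\ge\delta=3$) is a point the paper leaves implicit, and your alternative route through $1+n(q-1)=q^\alpha$ is equally valid.
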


\begin{proof}
Let $\alpha$ be the codimension of $\mC$. By Theorem~\ref{Thm:perf},
\[
n=\frac{q^\alpha-1}{q-1}=1+q+\cdots+q^{\alpha-1}.
\]
Reducing modulo $q$, we obtain $n\equiv 1\pmod q$.
\end{proof}

\medskip

\medskip
We now recall the construction of MFRD codes with minimum distance $3$ from
\cite[Construction C and Theorem 4.21]{alfarano2024maximum}, and we show that, for
a suitable choice of the length, it yields perfect flag-rank-metric codes.

Let
$$
D_2(n,\F_q)=\{A\in \mathrm{U}(n,\F_q): a_{i,j}=0 \text{ whenever } j-i\ge 2\}.
$$
We identify $D_2(n,\F_q)$ with $\F_q^{2n-1}$ via
$$
\rho\left(\sum_{i=1}^{n}\lambda_iE_i+\sum_{i=1}^{n-1}\mu_iF_i\right)
=
(\lambda_1,\mu_1,\lambda_2,\ldots,\mu_{n-1},\lambda_n),
$$
where $E_i$ and $F_i$ are supported in positions $(i,i)$ and $(i,i+1)$,
respectively. Denote by $\PG(2,q)$ the projective plane with underlying vector space $\F_q^3$.

Assume that $3\le n\le q^2+q+1$. Choose distinct points
$P_i=[u_i]\in\PG(2,q)$, $i\in[n]$, and points
$Q_i=[v_i]\notin\langle P_i,P_{i+1}\rangle$, $i\in[n-1]$. Let
$$
H=(u_1\mid v_1\mid u_2\mid v_2\mid\cdots\mid v_{n-1}\mid u_n)
\in \Mat(3,2n-1,\F_q),
$$
set $D=\ker(H)$, and define
$$
\mathcal{D}:=
\rho^{-1}(D)\oplus \overline{U}(n-2,\F_q)
\subseteq \mathrm{U}(n,\F_q),
$$
where $\overline{U}(n-2,\F_q)$ denotes the embedded copy of
$\mathrm{U}(n-2,\F_q)$ supported on the diagonals strictly above the first
superdiagonal.

\begin{lemma}{\cite[Theorem~4.20]{alfarano2024maximum}}\label{res:mfrd-d3}
The code $\mathcal{D}$ is an $\left\{n,\frac{n(n+1)}{2}-3,3\right\}_{\F_q}$ MFRD code. Furthermore, an
$\left\{n,\frac{n(n+1)}{2}-3,3\right\}_{\Fq}$ MFRD code exists if and only if $
n \leq |\mathrm{PG}(2,q)|=q^2+q+1$.
\end{lemma}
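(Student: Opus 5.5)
The plan is to prove the two assertions of Lemma~\ref{res:mfrd-d3} separately: first that $\mathcal{D}$ has the stated dimension and minimum distance $3$, so that it meets the Singleton-like bound; then that an MFRD code with these parameters exists exactly when $n\le q^2+q+1$.

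\textbf{Dimension.} The map $\rho$ is an isomorphism $D_2(n,\F_q)\to\F_q^{2n-1}$. The matrix $H$ has rank $3$, since its columns $u_i$ are $n\ge3$ distinct projective points and therefore span $\F_q^3$. Hence $\dim D=2n-4$. The complement $\overline{U}(n-2,\F_q)$ has dimension $\frac{(n-2)(n-1)}2$, and adding gives $\frac{n(n+1)}2-3$. This codimension $3$ matches the Singleton-like bound for $\delta=3$, since $2+1\cdot1=3$.

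\textbf{Minimum distance.} By Proposition~\ref{pro:nonzero} for $n>2$, any $M$ with $\mathrm{fr}(M)\le2$ lies in $D_2$. Moreover, by the description of $s_q(n,1)$ and $s_q(n,2)$ in Proposition~\ref{fr2z;general}, such an $M$ is one of the following:
\begin{enumerate}
\item[(i)] a single diagonal entry;
\item[(ii)] two diagonal entries;
\item[(iii)] a nonzero entry $\mu_i$ together with arbitrary $\lambda_i,\lambda_{i+1}$.
\end{enumerate}
Any element of $\mathcal{D}$ with weight $\le2$ must have zero component in $\overline{U}(n-2)$. Its $\rho$-image is therefore in $\ker H$, so one of the following linear relations would hold:
\begin{enumerate}
\item[(i)] $\lambda_j u_j=0$;
\item[(ii)] $\lambda_ju_j+\lambda_ku_k=0$;
\item[(iii)] $\lambda_iu_i+\mu_iv_i+\lambda_{i+1}u_{i+1}=0$ with $\mu_i\ne0$.
\end{enumerate}
Case (i) is impossible because $u_j\ne0$. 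Case (ii) is impossible because $P_j\neq P_k$. Case (iii) would put $Q_i$ on the line $\langle P_i,P_{i+1}\rangle$, which is excluded by the choice of $Q_i$. So every nonzero codeword has weight $\ge3$. Since the codimension is $3$, Theorem~\ref{th:Singleton_codimension} forces equality $\delta=3$.

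\textbf{Existence.} Sufficiency follows from the construction, provided the points $Q_i$ exist. They do: a line of $\PG(2,q)$ has $q+1<q^2+q+1$ points, so a point off $\langle P_i,P_{i+1}\rangle$ can always be chosen.

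For necessity, take an $\{n,\frac{n(n+1)}2-3,3\}$ code $\mC$ and let $\pi:\mathrm{U}(n,\F_q)\to\mathrm{U}(n,\F_q)/\mC\cong\F_q^3$ be the quotient map. Put $w_j=\pi(E_j)$. Each $w_j$ is nonzero, because $\mathrm{fr}(E_j)=1<3$. The $w_j$ are pairwise non-proportional: if $w_k=c\,w_j$, then $E_k-cE_j\in\mC$, which has weight $\le2$. Hence $[w_1],\dots,[w_n]$ are $n$ distinct points of $\PG(2,q)$, and so $n\le q^2+q+1$.

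The main obstacle is the distance step. One has to be sure that the weight-$\le2$ classification is complete and that the $\overline{U}$-component must vanish, that is, that no weight-$\le2$ matrix has entries beyond the first superdiagonal. Both points are supplied by Propositions~\ref{pro:nonzero} and~\ref{fr2z;general}.
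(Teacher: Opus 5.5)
There is one step whose justification is false, although its conclusion is true. In the dimension count you claim that $H$ has rank $3$ because its columns $u_1,\dots,u_n$ are $n\ge3$ distinct projective points ``and therefore span $\F_q^3$''. Distinct points of $\PG(2,q)$ need not span the plane, because they may be collinear. The construction permits this: for $n\le q+1$ one may take all $P_i$ on a single line, and then $\langle u_1,\dots,u_n\rangle$ is only $2$-dimensional. Your equality $\dim D=2n-4$, and with it the codimension $3$ and the MFRD property, rests on $\rk(H)=3$, so this step needs a correct reason.

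One reason follows directly from the hypotheses of the construction. The vectors $u_1,u_2$ are linearly independent because $P_1\neq P_2$, and $v_1\notin\langle u_1,u_2\rangle$ because $Q_1\notin\langle P_1,P_2\rangle$. Hence $u_1,v_1,u_2$ is already a basis of $\F_q^3$. Alternatively, your distance argument never uses the rank of $H$. It shows that $\mathcal{D}$ has no nonzero element of weight at most $2$, so the Singleton-like bound gives $\mathrm{codim}_{\F_q}(\mathcal{D})\ge 3$, while $\mathrm{codim}_{\F_q}(\mathcal{D})=\rk(H)\le 3$.

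With this repaired, the rest of your proposal is correct. You reduce to $D_2(n,\F_q)$ via Proposition~\ref{pro:nonzero}, use the classification of weight-$1$ and weight-$2$ matrices, and turn the three configurations into the three linear dependencies among columns of $H$; these are exactly what the hypotheses on the $P_i$ and $Q_i$ rule out. Your necessity argument is also correct: the classes of $E_1,\dots,E_n$ in $\mathrm{U}(n,\F_q)/\mC\cong\F_q^3$ are nonzero and pairwise non-proportional. This is the Hamming-code bound applied to the diagonal part of $\mC$.

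The paper gives no proof of this lemma and imports it from \cite{alfarano2024maximum}, so your argument is a self-contained verification rather than an alternative to a proof in the text. Finally, the equivalence should be read for $n\ge 3$, as in the construction. For $n\le 2$ every matrix in $\mathrm{U}(n,\F_q)$ has flag-rank weight at most $2$, so no code of minimum distance $3$ exists even though $n\le q^2+q+1$.
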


\begin{theorem}\label{thm:constructed-perfect-delta3}
Let $n=q^2+q+1$. Then the code $\mC$ from~\cite[Construction C]{alfarano2024maximum} is a non-trivial perfect linear
flag-rank-metric code with minimum distance $3$ and codimension $3$.
\end{theorem}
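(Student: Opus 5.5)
The argument is a direct combination of Lemma~\ref{res:mfrd-d3} with Theorem~\ref{Thm:perf}, equivalently Corollary~\ref{cor:MFRD-perfect-34}.

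\textbf{Step 1: the construction is admissible.} Set $n=q^2+q+1$. Then $3\le n\le q^2+q+1$, so Construction~C makes sense. We choose the points $P_1,\dots,P_n$ to be all the points of $\PG(2,q)$, in any order. For each $i\in[n-1]$, the points $P_i$ and $P_{i+1}$ are distinct, so $\langle P_i,P_{i+1}\rangle$ is a line. Since $\PG(2,q)$ contains points off any line, a point $Q_i\notin\langle P_i,P_{i+1}\rangle$ exists. Hence the matrix $H$ and the code $\mC=\mathcal{D}$ are well defined.

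\textbf{Step 2: parameters of $\mC$.} By Lemma~\ref{res:mfrd-d3}, $\mC$ is an $\{n,\frac{n(n+1)}{2}-3,3\}_{\F_q}$ MFRD code. So it is linear, has minimum distance $\delta=3$, and has codimension $\alpha=3$.

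\textbf{Step 3: perfectness.} Since $\delta=3$, the packing radius is $t=1$. By Corollary~\ref{cor:ballsize},
\[
b_q(n,1)=1+(q^2+q+1)(q-1)=q^3=q^{\alpha}.
\]
Therefore $|\mC|\,b_q(n,1)=q^{\frac{n(n+1)}{2}-3}\,q^3=|\mathrm{U}(n,\F_q)|$, so $\mC$ is perfect. This also follows immediately from Theorem~\ref{Thm:perf}, since $n=\frac{q^3-1}{q-1}$.

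\textbf{Step 4: non-triviality.} The codimension is $3>0$, so $\mC\subsetneq\mathrm{U}(n,\F_q)$. The code is also nonzero: $n\ge7$ gives $\frac{n(n+1)}{2}-3>0$, and in fact $\mC$ contains $\overline{U}(n-2,\F_q)\neq\{\boldsymbol{0}\}$.

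\textbf{Main obstacle.} The only substantive step is Step~1, checking that the choices required by Construction~C can be made at the extremal length $n=q^2+q+1$. This needs only that $\PG(2,q)$ has exactly $q^2+q+1$ points and that every line misses some point. Everything else is immediate from results stated earlier in the paper.
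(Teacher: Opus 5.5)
Your proof is correct and follows essentially the same route as the paper: Lemma~\ref{res:mfrd-d3} gives an MFRD code of minimum distance $3$ and codimension $3$, and since $n=q^2+q+1=\frac{q^3-1}{q-1}$ we get $b_q(n,1)=q^3$, i.e.\ the condition of Theorem~\ref{Thm:perf} with $\alpha=3$. Your extra checks are harmless additions that the paper leaves implicit: that Construction~C is admissible at the extremal length $n=q^2+q+1$, and that the code is non-trivial.
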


\begin{proof}
By Lemma \ref{res:mfrd-d3}, the code
$\mathcal{D}$ from ~\cite[Construction C]{alfarano2024maximum} is an MFRD code with minimum distance $3$ whenever $3\le n\le q^2+q+1$. Hence, for $n=q^2+q+1$, the code
$\mathcal{D}$ is an MFRD code with minimum distance $3$.
Since the Singleton-like bound gives codimension $3$ for minimum distance $3$, we have
$\mathrm{codim}_{\F_q}(\mathcal{D})=3$. Moreover,
$$
n=q^2+q+1=\frac{q^3-1}{q-1}.
$$
Therefore the numerical condition in Theorem~\ref{Thm:perf} is satisfied with
$\alpha=3$. Hence $\mathcal{D}$ is perfect.
\end{proof}

\subsection{Perfect codes with minimum distance $5$}

\begin{theorem}\label{Thm:t2-perfect}
Let $\mC\subseteq \mathrm{U}(n,\F_q)$ be a linear flag-rank-metric code with minimum distance $\delta=5$, and let $\alpha$ be its codimension. Then $\mC$ is perfect if and only if
\[
n=
\frac{-(2q^2-q+3)+\sqrt{(2q^2+q+1)^2+8(q^\alpha-q)}}{2(q-1)}.
\]
Moreover, necessarily $\alpha>\delta$.
\end{theorem}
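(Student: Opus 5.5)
The plan follows the proof of Theorem~\ref{Thm:perf}: perfection for a linear code becomes the identity $b_q(n,2)=q^\alpha$, and we solve this quadratic equation in $n$.

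\textbf{Step 1: reduce to $b_q(n,2)=q^\alpha$.} Since $\delta=5$, the radius is $t=2$. A linear code has $|\mC|=q^{n(n+1)/2-\alpha}$, so by the argument of Proposition~\ref{prop:condmodq}, $\mC$ is perfect if and only if $b_q(n,2)=q^\alpha$. Minimum distance $5$ forces $n\ge3$, because for $n\le 2$ the maximum flag-rank weight is at most $2$. Hence Corollary~\ref{cor:ballsize} applies and the condition reads
\[
1+n(q-1)+(n-1)q^2(q-1)+\binom{n}{2}(q-1)^2=q^\alpha .
\]

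\textbf{Step 2: write it as a quadratic in $n$.} Multiplying by $2$ and collecting powers of $n$ gives
\[
(q-1)^2n^2+(q-1)(2q^2-q+3)\,n+\bigl(2+2q^2-2q^3-2q^\alpha\bigr)=0 .
\]
The linear coefficient comes from $2(q-1)+2q^2(q-1)-(q-1)^2=(q-1)(2q^2-q+3)$.

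\textbf{Step 3: compute the discriminant.} It factors as $(q-1)^2$ times
\[
(2q^2-q+3)^2-4\bigl(2+2q^2-2q^3-2q^\alpha\bigr)=4q^4+4q^3+5q^2-6q+1+8q^\alpha .
\]
Since $(2q^2+q+1)^2=4q^4+4q^3+5q^2+2q+1$, this equals $(2q^2+q+1)^2+8(q^\alpha-q)$.

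\textbf{Step 4: pick the correct root.} The product of the roots is $\bigl(2+2q^2-2q^3-2q^\alpha\bigr)/(q-1)^2$. This is negative because $q^\alpha\ge q$ and $q^3\ge q^2+1$ for $q\ge2$. So exactly one root is positive, namely the one with the $+$ sign. Cancelling one factor $(q-1)$ from numerator and denominator gives the formula in the statement. Every step is reversible, so the equivalence holds in both directions.

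\textbf{Step 5: the bound on $\alpha$.} Theorem~\ref{th:Singleton_codimension} with $\delta=5$ gives
\[
\alpha\ge 4+\lfloor 2\rfloor\cdot\left\lfloor\tfrac{\sqrt{17}-1}{2}\right\rfloor=4+2=6>5 .
\]

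The computation is routine. The only places needing care are the algebra matching the discriminant to the stated form and the sign argument that isolates the positive root.
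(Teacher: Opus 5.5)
Your proposal is correct and follows the paper's proof: you reduce perfection to $b_q(n,2)=q^\alpha$ via Corollary~\ref{cor:ballsize}, solve the resulting quadratic in $n$, keep the positive root, and obtain $\alpha\ge 6>5$ from the Singleton-like bound. Your explicit discriminant computation, the product-of-roots sign argument, and the observation that $\delta=5$ forces $n\ge 3$ (so the ball formula applies) fill in details the paper leaves implicit.
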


\begin{proof}
Since $\delta=5$, we have $t=\left\lfloor\frac{\delta-1}{2}\right\rfloor=2$.
Thus $\mC$ is perfect if and only if
\[
b_q(n,2)=\frac{|\mathrm{U}(n,\F_q)|}{|\mC|}=q^\alpha.
\]
By Corollary~\ref{cor:ballsize},
\[
b_q(n,2)
=
1+n(q-1)+(n-1)q^2(q-1)+\binom{n}{2}(q-1)^2.
\]
Therefore being perfect is equivalent to
\[
1+n(q-1)+(n-1)q^2(q-1)+\binom{n}{2}(q-1)^2
=
q^\alpha.
\]
Expanding and collecting the terms in $n$, we obtain
\[
\frac{(q-1)^2}{2}n^2
+
\frac{(q-1)(2q^2-q+3)}{2}n
+
1-q^\alpha-q^3+q^2
=
0.
\]
Solving this quadratic equation gives
\[
n=
\frac{-(2q^2-q+3)\pm\sqrt{(2q^2+q+1)^2+8(q^\alpha-q)}}{2(q-1)}.
\]
Since $n$ is positive, only the positive sign is possible. Hence
\[
n=
\frac{-(2q^2-q+3)+\sqrt{(2q^2+q+1)^2+8(q^\alpha-q)}}{2(q-1)}.
\]
Finally, the condition $\alpha>\delta$ follows from the Singleton-like bound.
\end{proof}

\begin{corollary}
Let $p$ be an odd prime. If $\mC\subseteq \mathrm{U}(n,\F_p)$ is a perfect flag-rank-metric code with minimum distance $\delta=5$, then $n\equiv1$ or $n\equiv2
\pmod p$.
\end{corollary}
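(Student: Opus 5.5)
The approach is to reduce the perfectness identity for $t=2$ modulo $p$, following the same idea as Proposition~\ref{prop:condmodq} and Corollary~\ref{cor:perf-34-congruence}. Since $\delta=5$, the radius is $t=\left\lfloor\frac{\delta-1}{2}\right\rfloor=2$.

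First I check that the explicit formula for $b_p(n,2)$ applies. For $n\in\{2,3\}$ the maximum flag-rank distance is at most $4$, as noted in the proof of Corollary~\ref{cor:nonexistence-n2-n3}. So a code with $\delta=5$ forces $n\ge 4$, and Corollary~\ref{cor:ballsize} is available. By the proof of Theorem~\ref{Thm:t2-perfect}, perfectness gives
\[
1+n(p-1)+(n-1)p^2(p-1)+\binom{n}{2}(p-1)^2=p^\alpha ,
\]
where $\alpha$ is the codimension. Theorem~\ref{Thm:t2-perfect} also gives $\alpha>5$, so in particular $\alpha\ge1$. Hence the right-hand side is divisible by $p$, which is exactly the conclusion of Proposition~\ref{prop:condmodq}.

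Next I reduce the left-hand side modulo $p$ term by term. We have $n(p-1)\equiv -n$, then $(n-1)p^2(p-1)\equiv 0$, and $\binom{n}{2}(p-1)^2\equiv\binom{n}{2}$, where $\binom n2$ is an integer. This yields
\[
1-n+\frac{n(n-1)}{2}\equiv 0 \pmod p .
\]
Since $p$ is odd, $2$ is invertible modulo $p$, so multiplying by $2$ preserves the congruence. The result is $n^2-3n+2=(n-1)(n-2)\equiv 0\pmod p$. Because $p$ is prime, $\F_p$ has no zero divisors, so $n\equiv1$ or $n\equiv2\pmod p$.

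There is no real obstacle in this argument. The only points needing care are that the hypothesis ``$p$ odd'' is used precisely to clear the denominator $2$, and that primality (not just $q$ a prime power) is used to split the product. For a general prime power $q$, the product $(n-1)(n-2)$ could vanish modulo $q$ without either factor doing so.
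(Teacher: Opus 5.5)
Your proof is correct and matches the paper's argument: reduce $b_p(n,2)=p^\alpha$ modulo $p$ to get $1-n+\binom{n}{2}\equiv 0$, multiply by $2$ to reach $(n-1)(n-2)\equiv 0 \pmod p$, and split the product (your checks that $n\ge 4$ and $\alpha\ge 1$ are welcome extra care). Your closing remark is wrong, though, in two respects: since $n-1$ and $n-2$ are coprime, $q\mid (n-1)(n-2)$ forces $q$ to divide one of the two factors whenever $q$ is a prime power, so the conclusion $n\equiv 1$ or $2 \pmod q$ holds for every odd prime power $q$; and passing from $x\equiv 0$ to $2x\equiv 0$ never requires $2$ to be invertible, so oddness is only what makes the conclusion non-vacuous (modulo $2$ it says nothing).
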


\begin{proof}
If $\mC$ is perfect, then by Theorem~\ref{Thm:t2-perfect},
\[
1+n(p-1)+(n-1)p^2(p-1)+\binom{n}{2}(p-1)^2
=
p^\alpha.
\]
Reducing modulo $p$, we obtain
\[
1-n+\binom{n}{2}\equiv0\pmod p.
\]
Since $p$ is odd, this is equivalent to
\[
2-2n+n(n-1)\equiv0\pmod p,
\]
that is,
\[
(n-1)(n-2)\equiv0\pmod p.
\]
Therefore, $n\equiv1$ or $n\equiv2
\pmod p$.
\end{proof}

\begin{remark}
For $p=2$, the same reduction has to be handled separately, since one cannot divide by $2$ modulo $2$. In this case the condition becomes
\[
1+n+\binom{n}{2}\equiv0\pmod 2,
\]
which is equivalent to $n\equiv1$ or $n\equiv2
\pmod 4$.
\end{remark}

\begin{proposition}
Assume that $q=2$ and let $\mC\subseteq \mathrm{U}(n,\F_2)$ be a linear flag-rank-metric code with minimum distance $\delta=5$ and codimension $\alpha$. If $\alpha$ is odd and $\alpha\ge11$, then $\mC$ is not perfect.
\end{proposition}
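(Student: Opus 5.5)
The plan is to turn the perfectness condition for $q=2$ into a difference-of-squares equation with a fixed right-hand side. That equation has only finitely many solutions, and all of them have small codimension.

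\textbf{Step 1: specialise the ball size.} By Theorem~\ref{Thm:t2-perfect}, with $t=2$, the code $\mC$ is perfect if and only if $b_2(n,2)=2^\alpha$. Put $q=2$ in the formula of Corollary~\ref{cor:ballsize}:
\[
b_2(n,2)=1+n+4(n-1)+\binom{n}{2}=\frac{n^2+9n-6}{2}.
\]
So perfectness is equivalent to $n^2+9n-6=2^{\alpha+1}$. Multiplying by $4$ and completing the square gives
\[
(2n+9)^2-2^{\alpha+3}=105 .
\]

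\textbf{Step 2: use the parity of $\alpha$.} Since $\alpha$ is odd, $\alpha+3$ is even, so $2^{\alpha+3}=m^2$ with $m=2^{(\alpha+3)/2}$. The equation becomes
\[
(2n+9-m)(2n+9+m)=105 .
\]
Both factors are positive integers, because the product is positive and the second factor is positive. Hence $2n+9+m\le 105$, and so $m\le 104/2=52$. This means $2^{(\alpha+3)/2}\le 32$, that is $\alpha\le 7$. For completeness one can list the factorisations $105=1\cdot105=3\cdot35=5\cdot21=7\cdot15$. These give $m\in\{52,16,8,4\}$, and only $m=16$ ($n=5$, $\alpha=5$), $m=8$ ($n=4$, $\alpha=3$) and $m=4$ ($n=1$) are powers of two. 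All of them have $\alpha\le 5$.

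\textbf{Conclusion.} For odd $\alpha\ge 11$, and in fact already for odd $\alpha\ge 9$, no $n$ satisfies the equation, so $\mC$ is not perfect. There is no real obstacle here. The only care needed is to check the specialisation of $b_q(n,2)$ at $q=2$ correctly, and to note that the factor $2n+9-m$ is positive, so that the bound on $m$ is valid.
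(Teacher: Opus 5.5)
Your proof is correct and is essentially the paper's argument. Both reduce perfectness to $(2n+9)^2=2^{\alpha+3}+105$ and use that, for odd $\alpha$, two distinct squares $m^2<(2n+9)^2$ differ by at least $2m+1$: the paper says $\Delta$ lies strictly between $\zeta^2$ and $(\zeta+1)^2$, and you say the smaller factor of $105=(2n+9-m)(2n+9+m)$ is at least $1$. Your enumeration of factor pairs is a small bonus, since it rules out all odd $\alpha\ge 7$, but it contains one slip: $5\cdot 21$ gives $2n+9=13$, hence $n=2$ (not $n=4$) with $\alpha=3$, which does not affect the conclusion.
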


\begin{proof}
For $q=2$, Theorem~\ref{Thm:t2-perfect} gives
$n=\frac{-9+\sqrt{\Delta}}{2}$,
where $\Delta=105+2^{\alpha+3}$.
Since $\alpha$ is odd, $\alpha+3$ is even. Let $\zeta=2^{\frac{\alpha+3}{2}}$. Then, 
$\zeta^2=2^{\alpha+3}$. Clearly, $\zeta^2<\Delta$.
Moreover, since $\alpha\ge 11$, we have
$105<2^{\frac{\alpha+5}{2}}+1$,
and hence $\Delta
=
2^{\alpha+3}+105
<
2^{\alpha+3}+2^{\frac{\alpha+5}{2}}+1
=
(\zeta+1)^2$.
Therefore $\Delta$ lies strictly between the two consecutive squares $\zeta^2$ and $(\zeta+1)^2$. Hence $\Delta$ is not a perfect square, so $n$ is not an integer. Consequently, $\mC$ cannot be perfect.
\end{proof}

\begin{remark}
Let $\mC\subseteq \mathrm{U}(n,\F_3)$ be a flag-rank-metric code of codimension $\alpha=7$ and minimum distance $\delta=5$. If such a code satisfies the numerical condition of Theorem~\ref{Thm:t2-perfect}, then necessarily
\[
[n,k,\delta]=[29,428,5].
\]
Indeed, for $q=3$ and $\alpha=7$, the formula in Theorem~\ref{Thm:t2-perfect} gives $n=29$, and therefore
\[
k=\frac{29\cdot30}{2}-7=428.
\]
We also performed computations in \textsc{Magma} for all integers $\alpha$ and all prime powers $q$ satisfying
$6\le \alpha\le100$, and $2\le q\le97$.
In all cases, the value of $n$ given by Theorem~\ref{Thm:t2-perfect} was not an integer, except for the case $q=3$ and $\alpha=7$. Hence, within this range, no other candidate parameter set occurs.
\end{remark}

\subsection{Perfect codes with minimum distance $7$}

\begin{theorem}\label{Thm:t3-perfect}
Let $\mC\subseteq \mathrm{U}(n,\F_q)$ be a linear flag-rank-metric code with minimum distance $\delta=7$, and let $\alpha$ be its codimension. Then $\mC$ is perfect if and only if $n\ge3$ satisfies
\begin{equation}\label{eq:cubic-perfect}
\begin{aligned}
&(q-1)^3n^3
+3(q-1)^2(2q^2-q+2)n^2 \\
&\quad +(q-1)(6q^4-18q^3+26q^2-7q+11)n \\
&\quad +6\bigl(-2q^5+4q^4-5q^3+3q^2+1-q^\alpha\bigr)
=0.
\end{aligned}
\end{equation}
Moreover, necessarily $\alpha\ge10$.
\end{theorem}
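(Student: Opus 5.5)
The plan is to follow the proof of Theorem~\ref{Thm:t2-perfect}, using the radius-$3$ ball in place of the radius-$2$ ball. Since $\delta=7$, the packing radius is $t=\left\lfloor\frac{\delta-1}{2}\right\rfloor=3$. Because $\mC$ is linear of codimension $\alpha$, we have $|\mathrm{U}(n,\F_q)|/|\mC|=q^\alpha$. So, by definition, $\mC$ is perfect if and only if $b_q(n,3)=q^\alpha$.

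A code of minimum distance $7$ already needs $n\ge 3$ (for $n\le 3$ the maximum weight is at most $4$). The hypothesis $n\ge3$ is also exactly what makes the closed formula of Corollary~\ref{cor:ballsize} valid. I will therefore substitute
\[
b_q(n,3)=1+n(q-1)+(n-1)q^2(q-1)+\binom{n}{2}(q-1)^2+(n-2)q^4(q-1)+(n-1)(n-2)q^2(q-1)^2+\binom{n}{3}(q-1)^3
\]
into $b_q(n,3)=q^\alpha$. I then multiply by $6$ to clear the denominators of $\binom n2$ and $\binom n3$, and collect powers of $n$. Every step is reversible, so this gives the stated equivalence.

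The only real work is the bookkeeping in this expansion, which I would organise coefficient by coefficient.
\begin{itemize}
\item The $n^3$ term comes only from $\binom n3(q-1)^3$; after multiplying by $6$ it gives $(q-1)^3$.
\item The constant term is $b_q(0,3)$ evaluated formally:
\[
1-q^2(q-1)-2q^4(q-1)+2q^2(q-1)^2=-2q^5+4q^4-5q^3+3q^2+1 .
\]
Multiplying by $6$ and subtracting $6q^\alpha$ matches the last line of \eqref{eq:cubic-perfect}.
\item The $n^2$ and $n$ coefficients are where errors are most likely, so this is the main (though routine) obstacle. I would factor out $(q-1)^2$, respectively $(q-1)$, at the end, and check the result by evaluating both sides at small values, e.g.\ $q=2$ and $n=3,4,5$, against the direct formula.
\end{itemize}

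Finally, the bound $\alpha\ge10$ follows from the Singleton-like bound of Theorem~\ref{th:Singleton_codimension} with $\delta=7$. Here $\delta-1=6$, $\lfloor\sqrt6\rfloor=2$ and $\left\lfloor\frac{\sqrt{25}-1}{2}\right\rfloor=2$, so the bound reads $\alpha\ge 6+2\cdot2=10$.
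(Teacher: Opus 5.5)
Your proposal is correct and is essentially the paper's own proof: reduce perfection to $b_q(n,3)=q^\alpha$, substitute the formula of Corollary~\ref{cor:ballsize}, multiply by $6$ and collect powers of $n$, then obtain $\alpha\ge10$ from the Singleton-like bound. The $n^2$ and $n$ coefficients you left unexpanded do come out to $3(q-1)^2(2q^2-q+2)$ and $(q-1)(6q^4-18q^3+26q^2-7q+11)$; one harmless slip is that your parenthetical actually shows $n\ge4$, and in fact $n\ge5$ since the maximum weight in $\mathrm{U}(4,\F_q)$ is $6$, which of course still gives the $n\ge3$ you need.
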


\begin{proof}
Since $\delta=7$, we have $t=\left\lfloor\frac{\delta-1}{2}\right\rfloor=3$.
Thus $\mC$ is perfect if and only if
\[
b_q(n,3)=\frac{|\mathrm{U}(n,\F_q)|}{|\mC|}=q^\alpha.
\]
By Corollary~\ref{cor:ballsize},
\[
\begin{aligned}
b_q(n,3)
={}&
1+n(q-1)
+
(n-1)q^2(q-1)
+
\binom{n}{2}(q-1)^2\\
&+
(n-2)q^4(q-1)
+
(n-1)(n-2)q^2(q-1)^2
+
\binom{n}{3}(q-1)^3.
\end{aligned}
\]
Hence being perfect is equivalent to
\[
\begin{aligned}
&1+n(q-1)
+
(n-1)q^2(q-1)
+
\binom{n}{2}(q-1)^2\\
&\quad+
(n-2)q^4(q-1)
+
(n-1)(n-2)q^2(q-1)^2
+
\binom{n}{3}(q-1)^3
=
q^\alpha.
\end{aligned}
\]
Multiplying by $6$ and simplifying yields precisely Equation~\eqref{eq:cubic-perfect}.
Finally, the condition $\alpha\ge10$ follows from the Singleton-like bound. 
\end{proof}

\begin{remark}
Using \textsc{Magma}, we checked equation~\eqref{eq:cubic-perfect} for all integers $\alpha$ and all prime powers $q$ satisfying $10\le \alpha\le100$, and $2\le q\le97$.
In all cases, the corresponding value of $n$ was not an integer. Hence, within this range, Equation~\eqref{eq:cubic-perfect} gives no candidate values of $n$ for a perfect flag-rank-metric code with minimum distance $\delta=7$.
\end{remark}

\subsection{An asymptotic result}

We conclude with an asymptotic nonexistence result for perfect codes of fixed length and small distance values.

\begin{proposition}\label{prop:asymptotic-obstruction-perfect}
Fix $n$ and let $\delta=2t+1$ with $t<n$. Suppose that
$$
\delta-1+
\left\lfloor\sqrt{\delta-1}\right\rfloor
\left\lfloor
\frac{\sqrt{4(\delta-1)+1}-1}{2}
\right\rfloor
>
\frac{t(t+1)}2.
$$
Then perfect linear flag-rank-metric codes in $\mathrm{U}(n,\Fq)$ with minimum distance
$\delta$ do not exist for all sufficiently large prime powers $q$.
\end{proposition}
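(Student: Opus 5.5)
The plan is to compare two pieces of information about a perfect code: the exact size $b_q(n,t)$ that the sphere-packing equality forces on $q^{\alpha}$, and the lower bound on $\alpha$ coming from the Singleton-like bound.

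Let $\mC\subseteq \mathrm{U}(n,\Fq)$ be a perfect linear code with minimum distance $\delta=2t+1$ and codimension $\alpha$. Write $S(\delta)$ for the right-hand side of the Singleton-like bound \eqref{eq:sing-bound}.

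First, the proof of Proposition~\ref{prop:condmodq} shows that perfection gives $b_q(n,t)=q^{\alpha}$.

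Second, Theorem~\ref{th:Singleton_codimension} gives $\alpha\ge S(\delta)$. By hypothesis $S(\delta)$ is an integer strictly larger than $t(t+1)/2$, so
\[
\alpha\ge \frac{t(t+1)}{2}+1 \qquad\text{and hence}\qquad q^{\alpha}\ge q\cdot q^{t(t+1)/2}.
\]

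Third, since $t<n$, Proposition~\ref{prop:asymptotic-spheres-upper-bound} applies with radius $t$. It gives a constant $K=K_{n,t}$, depending only on $n$ and $t$, with $b_q(n,t)\le K\,q^{t(t+1)/2}$ for every prime power $q$.

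Combining the three facts gives
\[
q\cdot q^{t(t+1)/2}\le q^{\alpha}=b_q(n,t)\le K\,q^{t(t+1)/2},
\]
so $q\le K$. Therefore, for every prime power $q>K_{n,t}$ there is no perfect linear code with these parameters, which is the claim.

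The trivial code is excluded automatically. Because $\delta=2t+1\ge 3$ when $t\ge1$, $\mC$ is a proper subspace. If $t=0$, the hypothesis would read $0>0$, which is false, so the case $t=0$ never arises under the assumption.

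The main obstacle is to make sure the $O_{n,t}$ estimate is genuinely uniform in $q$: we need $b_q(n,t)\le K q^{t(t+1)/2}$ for all $q$, not only for some unspecified large $q$. I would handle this by reading the bound off Corollary~\ref{cor:diagonal-bound} directly. Each factor $\sum_{j\le t-i+1}\binom{n-i+1}{j}(q-1)^j$ is at most $2^{n}q^{t-i+1}$, so $s_q(n,r)\le 2^{nr}q^{r(r+1)/2}$ for each $r\le t$. Summing over $0\le r\le t$ (and using $s_q(n,0)=1$) gives an explicit constant such as $K=(t+1)2^{nt}$.

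One also needs $r\le n$ for each radius $r\le t$ where Corollary~\ref{cor:diagonal-bound} is used; this is guaranteed by $t<n$.
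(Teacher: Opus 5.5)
Your proof is correct and follows the paper's argument: perfection gives $b_q(n,t)=q^{\alpha}$, the Singleton-like bound together with integrality gives $\alpha\ge t(t+1)/2+1$, and the $O_{n,t}\bigl(q^{t(t+1)/2}\bigr)$ estimate obtained from Corollary~\ref{cor:diagonal-bound} then rules out large $q$. Your explicit constant $K=(t+1)2^{nt}$ makes the paper's implicit threshold effective, giving non-existence for every prime power $q>K$. Uniformity over all $q$ is not actually required, since a bound valid only for sufficiently large $q$ would already suffice.
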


\begin{proof}
Suppose that there exists a perfect linear flag-rank-metric code
$\mC\subseteq \mathrm{U}(n,\Fq)$ with minimum distance $\delta=2t+1$, and let
$\alpha=\mathrm{codim}_{\Fq}(\mC)$. Since $\mC$ is perfect and has packing radius $t$, we
have
$$
|\mC|b_q(n,t)=q^{n(n+1)/2}.
$$
As $\mC$ is linear, this is equivalent to $b_q(n,t)=q^\alpha$.
On the other hand, by Proposition~\ref{prop:asymptotic-spheres-upper-bound},
we have
$$
b_q(n,t)=O_{n,t}\left(q^{t(t+1)/2}\right)
$$
as $q\to\infty$.
By the Singleton-like bound, applied to a code of minimum distance $\delta$,
we have
$$
\alpha\ge
\delta-1+
\left\lfloor\sqrt{\delta-1}\right\rfloor
\left\lfloor
\frac{\sqrt{4(\delta-1)+1}-1}{2}
\right\rfloor.
$$
By assumption, this lower bound is strictly larger than $t(t+1)/2$. Hence
there exists an integer $\beta>t(t+1)/2$ such that $\alpha\ge \beta$.
Therefore, $q^\alpha\ge q^\beta$.
Thus $\mC$ perfect implies
$$
q^\beta\le q^\alpha=b_q(n,t)
=O_{n,t}\left(q^{t(t+1)/2}\right),
$$
which is impossible as $q\to\infty$. Hence such perfect codes do not exist for
all sufficiently large prime powers $q$.
\end{proof}

\begin{corollary}\label{cor:asymptotic-obstruction-delta-up-to-eleven}
Fix $n$. For each
$\delta\in\{3,5,7,9,11\},
$
perfect linear flag-rank-metric codes in $\mathrm{U}(n,\Fq)$ with minimum distance
$\delta$ do not exist for all sufficiently large prime powers $q$.
\end{corollary}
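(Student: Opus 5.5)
The plan is to deduce the corollary directly from Proposition~\ref{prop:asymptotic-obstruction-perfect}, applied with $t=\frac{\delta-1}{2}\in\{1,2,3,4,5\}$. Two things must be checked: the numerical inequality in the hypothesis of that proposition, and the side condition $t<n$.

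\textbf{The numerical inequality.} Write $\sigma(\delta)$ for the right-hand side of the Singleton-like bound~\eqref{eq:sing-bound}, so $\sigma(\delta)=2t+\lfloor\sqrt{2t}\rfloor\,\bigl\lfloor\frac{\sqrt{8t+1}-1}{2}\bigr\rfloor$. Evaluating this for each $t$ and comparing with $t(t+1)/2$ gives the following.
\begin{itemize}
\item $t=1$: $\sigma=2+1\cdot1=3>1$.
\item $t=2$: $\lfloor\sqrt4\rfloor=2$ and $\lfloor(\sqrt{17}-1)/2\rfloor=1$, so $\sigma=6>3$.
\item $t=3$: $\lfloor\sqrt6\rfloor=2$ and $(\sqrt{25}-1)/2=2$, so $\sigma=10>6$.
\item $t=4$: $\lfloor\sqrt8\rfloor=2$ and $\lfloor(\sqrt{33}-1)/2\rfloor=2$, so $\sigma=12>10$.
\item $t=5$: $\lfloor\sqrt{10}\rfloor=3$ and $\lfloor(\sqrt{41}-1)/2\rfloor=2$, so $\sigma=16>15$.
\end{itemize}
The hypothesis of Proposition~\ref{prop:asymptotic-obstruction-perfect} therefore holds in all five cases. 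This also explains why the list stops at $11$: for $t=6$ one gets $\sigma=12+3\cdot3=21=t(t+1)/2$, and the strict inequality fails.

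\textbf{The side condition $t<n$.} If $t<n$, the proposition applies directly and the claim follows. It remains to handle $n\le t\le 5$. Here I would show that no linear code in $\mathrm{U}(n,\Fq)$ with minimum distance $\delta$ exists at all, for any $q$. By Theorem~\ref{th:Singleton_codimension}, such a code would have codimension at least $\sigma(\delta)$. But $n\le t$ gives $\frac{n(n+1)}{2}\le\frac{t(t+1)}{2}<\sigma(\delta)$ by the computation above. So the required codimension exceeds $\dim_{\Fq}\mathrm{U}(n,\Fq)$, which is impossible, and in particular no perfect code exists. The result holds vacuously in this range.

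\textbf{Main obstacle.} There is no real obstacle. The only care needed is in evaluating the floor expressions correctly and in noticing that the case $t\ge n$ is not covered by Proposition~\ref{prop:asymptotic-obstruction-perfect} and must be dismissed separately via the Singleton-like bound.
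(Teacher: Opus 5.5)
Your proposal is correct and follows the paper's proof, which makes the same comparison of the Singleton-like exponents $3,6,10,12,16$ with $t(t+1)/2\in\{1,3,6,10,15\}$ via the asymptotic ball estimate; your floor computations all check out. Your separate treatment of $n\le t$ through the Singleton-like bound is a small improvement: the paper's proof invokes Proposition~\ref{prop:asymptotic-spheres-upper-bound} without commenting on its hypothesis $t\le n$, and your argument closes that case explicitly.
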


\begin{proof}
Write $\delta=2t+1$. For
$\delta=3,5,7,9,11$,
we have respectively
$t=1,2,3,4,5$.
By Proposition~\ref{prop:asymptotic-spheres-upper-bound},
$$
b_q(n,t)=O_{n,t}\left(q^{t(t+1)/2}\right).
$$
If $\mC\subseteq \mathrm{U}(n,\Fq)$ is a perfect linear flag-rank-metric code with
minimum distance $\delta$ and codimension $\alpha$, then $b_q(n,t)=q^\alpha$.
On the other hand, the Singleton-like bound gives respectively $\alpha\ge 3$, $\alpha\ge 6$, $\alpha\ge 10$, $\alpha\ge 12$, and $\alpha\ge 16$. Moreover,
$\frac{t(t+1)}2\in \{1,3,6,10,15\}$
for $t=1,2,3,4,5$, respectively.
Thus, in each case, the exponent forced by the Singleton-like bound is strictly larger than the exponent in the asymptotic upper bound for $b_q(n,t)$. Hence
the equality
$b_q(n,t)=q^\alpha
$ is impossible for all sufficiently large $q$. Therefore no such perfect codes
exist for all sufficiently large prime powers $q$.
\end{proof}

\begin{remark}
The previous corollary does not contradict the perfect codes with minimum
distance $3$ constructed in Theorem~\ref{thm:constructed-perfect-delta3}.
Indeed, in that family one has $
n=q^2+q+1$,
so the length is not fixed as $q$ tends to infinity.
\end{remark}

\section{Conclusions}\label{sec:conclusions}

In this work, we initiated the study of the combinatorial structure induced by the flag-rank metric on the space $\mathrm{U}(n,\F_q)$ of upper triangular matrices over a finite field. We explicitly determined the sizes of spheres of flag-rank radius $0$, $1$, $2$, and $3$, and consequently obtained closed formulas for the sizes of balls of radius at most $3$. In particular, these results give a complete description of balls and spheres in $\mathrm{U}(n,\F_q)$ for $n\in\{2,3\}$. We further provide an asymptotic estimate for the sizes of balls of larger radius. 

Using these results, we derived a sphere-packing bound for flag-rank-metric codes and introduced the notion of perfect codes in this setting. We proved that no non-trivial perfect flag-rank-metric codes exist in $\mathrm{U}(n,\F_q)$ for $n\in\{2,3\}$. We then investigated necessary numerical conditions for the existence of perfect codes in higher dimensions. For minimum distance $\delta=3$, we obtained an exact characterization in terms of the codimension $\alpha$, namely
\[
n=\frac{q^\alpha-1}{q-1}.
\]
For minimum distances $\delta=5$ and $\delta=7$, we derived explicit quadratic and cubic conditions, respectively, that the parameters of a perfect code must satisfy. These conditions provide arithmetic obstructions to the existence of perfect flag-rank-metric codes and exclude many possible parameter sets. Finally, using the asymptotic upper bound for the sizes of balls of fixed radius, we show that, for fixed length $n$ and $\delta\in\{3,5,7,9,11\}$, perfect linear flag-rank-metric codes with minimum distance $\delta$ do not exist over $\Fq$ for all sufficiently large $q$.

The results presented here show that, even in small radius, the flag-rank metric exhibits a rich and rigid combinatorial behavior. Several natural questions remain open. In particular, it would be interesting to determine the sizes of balls and spheres of arbitrary radius, to refine the arithmetic obstructions for perfect codes, and to understand whether non-trivial perfect flag-rank-metric codes exist beyond the cases constructed or ruled out in this paper.

\bibliographystyle{abbrv}
\bibliography{Biblio.bib}

\end{document}